\newtheorem{thm}{Theorem}[section]
\newtheorem{coro}[thm]{Corollary}
\newtheorem{lemma}[thm]{Lemma}
\newtheorem{prop}[thm]{Proposition}
\theoremstyle{remark}
\newtheorem{remark}[thm]{\textbf{Remark}}
\theoremstyle{definition}
\newtheorem{notation}[thm]{Notation}
\numberwithin{equation}{thm}
\def\ker{\mathrm{Ker}}
\newcommand{\im}{\mathrm{Im}}
\newcommand{\Spec}{\mathrm{Spec}}
\newcommand{\lra}{\longrightarrow}
\newcommand{\simto}{\xrightarrow{\sim}}
\newcommand{\set}[1]{\{\,{#1}\,\}}
\newcommand{\ov}[1]{\overline{#1}}
  \newcommand{\textcyr}[1]{%
    {\fontencoding{OT2}\fontfamily{wncyr}\fontseries{m}\fontshape{n}%
     \selectfont #1}}
\newcommand{\sha}{{\mbox{\textcyr{Sh}}}}
\newcommand{\newpara}{\noindent\refstepcounter{thm}{\bf(\thethm)\;}}  %%%%%%%
\begin{document}
\title{\textbf{A cohomological Hasse principle over two-dimensional local rings}}
\author{Yong HU}
\date{}

\maketitle

\begin{abstract}
Let $K$ be the fraction field of a two-dimensional henselian, excellent, equi-characteristic local domain. We prove a local-global principle for Galois cohomology with certain finite coefficients over $K$. We use classical machinery from \'etale cohomology theory, drawing upon an idea in Saito's work on two-dimensional local class field theory. This approach works equally well over the function field of a curve over an equi-characteristic henselian discrete valuation field, thereby giving a different proof of (a slightly generalized version of) a recent result of Harbater, Hartmann and Krashen. We also present two applications. One is the Hasse principle for torsors under quasi-split semisimple simply connected groups without $E_8$ factor. The other gives an explicit upper bound for the Pythagoras number of a Laurent series field in three variables. This bound is sharper than earlier estimates.
\end{abstract}

%Key words: Hasse principle, Galois cohomology, simply connected groups, Pythagoras number, sum of squares

{\bf MSC2010 classes:} \ 11E72,   11E25, 19F15

%\tableofcontents

%\setcounter{section}{-1}

\section{Introduction}

Let $F$ be a field and $n\ge 1$ an integer that is invertible in $F$. For an integer $i\ge 0$, let $\mathbb{Z}/n(i)$ be the tensor product of $i$ copies of $\mu_n$, where $\mu_n$ denotes the Galois module (or \'etale sheaf in a more general context) of $n$-th roots of unity over varying bases.
Let $\Omega_F$ denote the set of normalized discrete valuations on $F$ and for each $v\in \Omega_F$, denote by $F_v$ the completion of $F$ at $v$. If $F$ is a global function field (i.e. the function field of a curve over a finite field), then the classical Albert--Brauer--Hasse--Noether theorem for Brauer groups implies the Hasse principle for the cohomology group
$H^2(F,\,\mathbb{Z}/n(1))$, i.e., the injectivity of the natural map
\[
H^2(F,\,\mathbb{Z}/n(1))\lra \prod_{v\in\Omega_F}H^2(F_v,\,\mathbb{Z}/n(1))\,.
\]The same is true for a number field if we enlarge $\Omega_F$ by adjoining the real places of $F$.

In his influential work on higher dimensional class field theory \cite{Ka86}, Kato suggested that  higher dimensional analogs of this Hasse principle should be concerned with the cohomology groups $H^r(F,\,\mathbb{Z}/n(r-1))$ for $r>1$. In that paper he proved, among others, a theorem which implies the Hasse principle for $H^3(F,\,\mathbb{Z}/n(2))$ when $F$ is the function field of a curve over a $p$-adic field. At almost the  same time, Saito (\cite{Sai86}, \cite{Sai87}) studied local class field theory over two-dimensional local rings with finite residue fields. It can be shown that Saito's work yields an analog of Kato's theorem. Namely, if $F$ is the fraction field of a two-dimensional, henselian, excellent, local domain with finite residue field, then the Hasse principle for the group $H^3(F,\,\mathbb{Z}/n(2))$ holds (cf. \cite[Prop.$\;$4.1]{Hu11}). In recent years, Kato's and Saito's theorems have been relied on in a couple of papers to derive Hasse principles for torsors under semisimple groups over the relevant fields. We may cite for example \cite{CTPaSu},  \cite{Preeti13} and \cite{Hu12}. These results can be viewed as extensions of earlier work over two-dimensional geometric fields with algebraically closed residue fields  (cf. \cite{CTOP}, \cite{CTGP}, \cite{BKG04}).

Research interests in related problems have been summed up by Colliot-Th\'el\`ene \cite{CTLille11} to two
local-global questions, which aim to generalize the forementioned results by assuming no cohomological condition on the residue field.
As we will work over the same base field as in his questions, let us now fix the setup and notation.

\vskip2mm

\begin{notation}\label{notationCohHasse1p1v3} Let $K$ be a field of one of the following types:

(a) The function field of a connected regular projective curve over the fraction field of a henselian excellent discrete valuation ring $A$.  We call this case the \emph{semi-global} case.

(b) The fraction field of a two-dimensional henselian excellent normal local domain $A$. This will be called the \emph{local} case.

Let $\Omega_K$ denote the set of (normalized, rank 1) discrete valuations on $K$, and let $K_v$ be the corresponding completion of $K$ for each $v\in \Omega_K$.
In both cases we denote by $k$ the residue field of $A$. Let $n\ge 1$ be an integer \emph{that is invertible in} $k$.

By resolution of embedded singularities on two-dimensional schemes (\cite[p.38 and p.43]{Sha66}, \cite[p.193]{Lip75}), in both cases one can find
a connected regular two-dimensional scheme $\mathcal{X}$ equipped with a proper morphism  $\pi: \mathcal{X}\to\Spec(A)$ satisfying the following conditions:

\begin{itemize}
  \item The reduced closed fiber $Y$ of $\pi$ is a \emph{simple normal crossing} (snc) divisor on $\mathcal{X}$.
  \item The function field of $\mathcal{X}$ is isomorphic to $K$.
  \item In the semi-global case the morphism $\pi$ is flat, and in the local case $\pi$ is birational.
\end{itemize}

The use of such an $A$-scheme $\mathcal{X}$ will be necessary in the proofs of our main results and can make the statements a little stronger. So we will fix it as part of our notation.
\end{notation}

\vskip2mm

The two questions of Colliot-Th\'el\`ene are the following:

(1) Let $r\ge 2$ be an integer. Is the natural map
\[
H^r(K,\,\mathbb{Z}/n(r-1))\lra \prod_{v\in \Omega_K}H^r(K_v,\,\mathbb{Z}/n(r-1))
\]injective?

(2) Let $G$ be a smooth connected linear algebraic group over $K$. Does the natural map of pointed sets
\[
H^1(K,\,G)\lra \prod_{v\in\Omega_K}H^1(K_v,\,G)
\]have trivial kernel? (If the answer is ``yes'', this means that a $G$-torsor over $K$ has a $K$-rational point if and only if it has a $K_v$-point for each $v\in\Omega_K$.)

\vskip2mm

As mentioned above, quite a number of results on question (2) have been obtained when the residue field $k$ is assumed algebraically closed or finite. In the semi-global case, question (2) together with local-global problems in a different but closely related context has been studied in \cite{HHK} and \cite{HHK15AJM}. There a certain $K$-rationality assumption on the group $G$ plays a special role in the arguments. Counterexamples have been given in \cite{CTPS16TAMS}  to show that the answer to question (2) can be negative without the rationality assumption.

Nevertheless, if $G$ is a semisimple simply connected group, the Hasse principle for $G$-torsors is expected even when $G$ is not $K$-rational. For groups of many types, question (2) can be treated by using a positive answer to question (1) via cohomological invariants, as was done in most of the known results.

In the semi-global case, if $A$ is complete and equi-characteristic, the cohomological Hasse principle in question (1) has been proved by Harbater, Hartmann and Krashen (\cite[Thm.$\;$3.3.6]{HHK12}) using a patching method. This result makes no additional assumption on the residue field and hence yields Hasse principles for torsors under certain quasi-split groups in this generality (cf. \cite[$\S$4.3]{HHK12}).

\

In this paper, we give a positive answer to question (1) in the equi-characteristic local case and discuss some applications to question (2).

The goal is to prove the following

\begin{thm}\label{thmCohHasse1p2v3}\footnote{As V. Suresh has observed recently, if we consider discrete valuations coming from all possible models $\mathcal{X}$,  this theorem (and hence all its consequences) can also be proved in the mixed characteristic case at least when assuming the existence of $n$-th roots of unity in $K$. His argument combines a refinement of our method with analysis of the ramification of each cohomology class on a suitably chosen model $\mathcal{X}$, and will be written up separatedly. }
With notation as in Notation$\;\ref{notationCohHasse1p1v3}$, assume $A$ is equi-characteristic. We identify the set
$\mathcal{X}^{(1)}$ of codimension $1$ points of $\mathcal{X}$ with the subset of $\Omega_K$ consisting of discrete valuations defined by these points.

In both the semi-global case and the local case,  the natural map
\[
H^r(K\,,\,\mathbb{Z}/n(r-1))\lra \prod_{v\in\mathcal{X}^{(1)}}H^r(K_v,\,\mathbb{Z}/n(r-1))
\]
is injective for every $r\ge 2$. As a consequence, the same holds if the product on the right hand side is taken over all $v\in\Omega_K$.
\end{thm}

Note that no assumption (except for the restriction on the characteristic) is required on the residue field.

The $r=2$ case of the above theorem  was essentially observed in \cite{CTOP}, as was explained in \cite[$\S$3]{Hu10} and \cite[Thm.$\;$3.2.3]{HuThese}.

Grosso modo, the proof of Harbater, Hartmann and Krashen in the semi-global case goes as follows. They first observe that if $R$ is an equi-characteristic  \emph{regular} local ring with fraction field $F$, then the Hasse principle for $H^r(F,\,\mathbb{Z}/n(r-1))$ follows from Panin's work on Gersten's conjecture for equi-characteristic regular local rings \cite{Panin03}.
Then patching techniques are developed to reduce the cohomological Hasse principle in the semi-global case to the \emph{regular} local case.
To this end, they define ``patch fields'' from data given by the closed fiber of $\mathcal{X}$ over $A$, and prove exact sequences of Mayer--Vietoris type leading to local-global principles with respect to the patches.

Our proof also starts with the regular local case, which is built upon Panin's work, whence the equi-characteristic hypothesis in the theorem. As in \cite{HHK12}, the Bloch--Kato conjecture has to be used. The major difference between the two methods is that we do not employ any patching machinery but only \'etale cohomology theory. We distinguish discrete valuations centered in the closed fiber $Y$ and those centered in the open complement $U=\mathcal{X}\setminus Y$, and use the hypercohomology of a suitable \'etale complex that carries information from both $Y$ and $U$  (see $\S$\ref{sec2} for details). This latter idea, borrowed from Saito's work on two-dimensional local class field theory (\cite{Sai86}, \cite{Sai87}), is the key to the avoidance of the machinery of \cite{HHK12}. This approach enables us to treat the local case and the semi-global case in a parallel way, and in our opinion, it is likely that its combination with the patching method may lead to further observations in the future.

\

In the second part of the paper, we present applications of our main theorem to Hasse principles for torsors under semisimple simply connected groups. In particular, we will prove in section \ref{sec3} (after the proof of Theorem\;\ref{thmCohHasse3p5v3}) the following  theorem.

\begin{thm}\label{thmCohHasse1p3v3}
Let $K$ be as in Theorem$\;\ref{thmCohHasse1p2v3}$ and let $G$ be a quasi-split, semisimple, simply connected group without $E_8$ factor over $K$. Assume that the order of the Rost invariant of every simple factor of $G$ is not divisible by the characteristic $\mathrm{char}(K)$.

Then the natural map
\[
H^1(K,\,G)\lra \prod_{v\in\Omega_K}H^1(K_v,\,G)
\]
 has trivial kernel. If $G$ is absolutely simple over $K$, one can fix a model $\mathcal{X}$ as in Notation$\;\ref{notationCohHasse1p1v3}$ and replace the index set $\Omega_K$ with $\mathcal{X}^{(1)}$.
\end{thm}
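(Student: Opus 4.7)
\textbf{Proof sketch of Theorem \ref{thmCohHasse1p3v3}.} The plan is the standard cohomological-invariants strategy: push a would-be counterexample $\alpha\in H^1(K,G)$ into $H^3(K,\mathbb{Z}/n(2))$ via the Rost invariant and kill it using Theorem \ref{thmCohHasse1p2v3} at $r=3$. Most of the work is the reduction to the absolutely simple quasi-split case so that the Rost invariant is available with trivial kernel. Decompose $G=\prod_i G_i$ into simple factors over $K$ and write each $G_i=R_{L_i/K}(H_i)$ for a finite separable extension $L_i/K$ and an absolutely simple, quasi-split, simply connected $H_i$ over $L_i$. Quasi-splitness is preserved under this presentation and the order of the Rost invariant depends only on the absolute Dynkin type, hence agrees with that of $G_i$ and in particular remains coprime to $\mathrm{char}(K)=\mathrm{char}(L_i)$. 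Shapiro's lemma gives $H^1(K,G_i)\cong H^1(L_i,H_i)$, while $L_i\otimes_K K_v=\prod_{w\mid v}(L_i)_w$ identifies $H^1(K_v,G_i)$ with $\prod_{w\mid v}H^1((L_i)_w,H_i)$. Since every $w\in\Omega_{L_i}$ lies above some $v\in\Omega_K$, the Hasse principle for $G_i$-torsors over $K$ reduces to the Hasse principle for $H_i$-torsors over $L_i$ with respect to $\Omega_{L_i}$.

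Next I would place each $L_i$ into the framework of Notation \ref{notationCohHasse1p1v3}. The integral closure $A_i'$ of $A$ in $L_i$ is again henselian (because $A$ is), excellent, normal, local, of the same Krull dimension and equi-characteristic as $A$. Taking the normalization of $\mathcal{X}$ in $L_i$, applying Lipman's desingularization theorem for excellent two-dimensional schemes, and then blowing up finitely many closed points to make the reduced closed fiber a simple normal crossing divisor, produces a regular two-dimensional model $\mathcal{X}_i'\to\Spec(A_i')$ with function field $L_i$ satisfying all the hypotheses of Notation \ref{notationCohHasse1p1v3}. Theorem \ref{thmCohHasse1p2v3} therefore applies over $L_i$.

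Now I invoke the Rost invariant $R_{H_i}\colon H^1(L_i,H_i)\to H^3(L_i,\mathbb{Z}/n(2))$, admissible because $n$ is invertible in the residue field. The crucial external input is that for an absolutely simple, quasi-split, simply connected group not of type $E_8$, the kernel of $R_{H_i}$ is trivial; this is classical for types $A$, $B$, $C$, $D$ (via Merkurjev--Suslin) and was established in the exceptional cases by Chernousov and by Garibaldi. Given a class $\alpha_i\in H^1(L_i,H_i)$ that is everywhere-locally trivial, naturality of the Rost invariant forces $R_{H_i}(\alpha_i)$ to be locally trivial at each $w\in\Omega_{L_i}$, so Theorem \ref{thmCohHasse1p2v3} gives $R_{H_i}(\alpha_i)=0$ and hence $\alpha_i=0$. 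In the absolutely simple case no Weil restriction is needed, so the same argument produces directly a class in $H^3(K,\mathbb{Z}/n(2))$ that is trivial at every $v\in\mathcal{X}^{(1)}$, and the stronger form of Theorem \ref{thmCohHasse1p2v3} yields the final clause.

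The main potential obstacle is the case-by-case verification of the triviality of $\ker R_{H_i}$ for each quasi-split type outside $E_8$ (in particular trialitarian $D_4$, quasi-split $E_6$ and $E_7$), and keeping the small-characteristic exceptions of those references aligned with the Rost-order hypothesis. The model construction for each $L_i$ is routine given Lipman, and the compatibility of Weil restriction with completions and with the Rost invariant is standard.
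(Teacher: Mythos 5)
Your reduction to the absolutely simple case via Shapiro's lemma and the construction of a model over each $L_i$ match the paper's strategy. However, the step where you claim that ``for an absolutely simple, quasi-split, simply connected group not of type $E_8$, the kernel of $R_{H_i}$ is trivial'' is false, and this is a genuine gap. The triviality of the Rost kernel for quasi-split groups is known only for low rank in the classical types: Garibaldi's result (cited in the paper) covers $B_n$ with $n\le 6$, $D_n$ with $n\le 6$ (plus split $D_7$), and ${}^2A_n$ with $n\le 5$. Beyond those ranks the Rost kernel can be nontrivial even for split groups over a two-dimensional local field like $\mathbb{R}(\!(x,y)\!)$; the paper's Remark~\ref{remarkCohHasse3p6v3} points out exactly this for split $B_7$, citing Garibaldi's example. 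Also, the attribution to Merkurjev--Suslin is off: their theorem gives trivial Rost kernel for $\mathbf{SL}_1(D)$ with $D$ of squarefree index (type ${}^1A$), not for spin or unitary groups.

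The paper handles the classical types $B_n$, $D_n$, ${}^2A_n$ by a different route that works for all ranks: it first proves a Hasse principle for the filtration subgroups $I^r(K)$ of the Witt ring (Proposition~\ref{propCohHasse3p1v3}, a ``climb the filtration'' argument combining Theorem~\ref{thmCohHasse1p2v3} with the fact that $\cap_r I^r(K)=0$), then derives Hasse principles for $\mathbf{SO}(q)$ and $\mathbf{U}(h)$ (Theorems~\ref{thmCohHasse3p2v3}, \ref{thmCohHasse3p3v3}), and finally obtains $\mathbf{SU}(h)$ and $\mathbf{Spin}(q)$ (Theorems~\ref{thmCohHasse3p4v3}, \ref{thmCohHasse3p5v3}) by embedding/twisting arguments; for $\mathbf{Spin}$ the key fact is that the spinor norm is surjective when $q$ is isotropic, which holds for quasi-split groups. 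For types ${}^1A_n$ and $C_n$ quasi-split, $H^1(K,G)=1$ and nothing is needed. The uniform Rost-invariant argument you propose really only works for the exceptional types $\ne E_8$ (and a few low-rank classical cases); to repair your proof you would need to replace that step for classical $B$, $D$, ${}^2A$ of general rank by the quadratic-form route.
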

Note that even in the semi-global case this theorem covers several groups that go beyond the results in \cite[$\S$4.3]{HHK12}. In the mixed characteristic case, if $A$ is a complete discrete valuation ring, the theorem is still true for quasi-split groups with only factors of classical types or of type $G_2$ (see Remark$\;$\ref{remarkCohHasse3p7v3}).

\

As another application of our cohomological Hasse principle, we will prove in section \ref{sec4} an upper bound for the Pythagoras number of Laurent series fields in three variables. As a notational convention, letters $x,\,y,\,z,\,t,\,t_1,\dotsc$ will be used to denote independent variables unless otherwise stated.

For any field $F$ of characteristic different from 2, the Pythagoras number $p(F)$ of $F$ is the smallest integer $p\ge 1$ or infinity, such that every sum of (finitely many) squares in $F$ can be written as a sum of $p$ squares.

The study of Pythagoras number for Laurent series fields in more than one variables was initiated by Choi, Dai, Lam and Reznick \cite{CDLR}. They obtained the equality $p(k(\!(x,\,y)\!))=2$ for a real closed field $k$ (e.g. $k=\mathbb{R}$). For an arbitrary field $k$, they proved that if $p(k(t))$ is bounded by a 2-power, then $p(k(\!(x,\,y)\!))$ is bounded by the same $2$-power. This result is strengthened in  \cite[Thm.\;1.1]{Hu13} by using Becher, Grimm and Van Geel's work \cite{BGvG12} on the Pythagoras number of fields of the shape $k(\!(x)\!)(t)$. (See also \cite{Sch01} for further results over general two-dimensional local rings and their fraction fields.)

It is conjectured  that the inequality $p(\mathbb{R}(\!(t_1,\dotsc, t_n)\!))\le 2^{n-1}$ is still true when $n\ge 3$ (\cite[p.80, Problem$\;$9]{CDLR}). The $n=3$ case is recently proved in \cite{Hu13}. For $n\ge 4$, the best upper bound until now is $2^n$. In fact, for any field $F$ such that $F(\sqrt{-1})$ has finite cohomological 2-dimension $r$, a theorem of Pfister combined with Milnor's conjecture (now proved in \cite{OVV07}) implies $p(F)\le 2^r$ (see section$\;$\ref{sec4}).

In this paper we will prove the following theorem.

\begin{thm}\label{thmCohHasse1p4v3}
  Let $k$ be a field of characteristic different from $2$ and let $r\ge 2$ be an integer. Then
  \[
  p(k(x,\,y))\le 2^r \quad \text{implies} \quad p(k(\!(x,\,y,\,z)\!))\le  p(k(\!(x,\,y)\!)(t))\le  p(k(\!(x,\,y)\!)(\!(z,\,t)\!))\le 2^r\,\]
   and
   \[
   p(k(x,\,y))=2^r \quad \text{implies} \quad  p(k(\!(x,\,y,\,z)\!))=p(k(\!(x,\,y)\!)(t))=p(k(\!(x,\,y)\!)(\!(z,\,t)\!))=2^r\,.
   \]
\end{thm}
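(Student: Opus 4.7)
The strategy is to invoke the Pfister criterion, made sharp by the Milnor conjecture \cite{OVV07}: for any field $F$ of characteristic $\ne 2$ and any sum of squares $\sigma\in F$, the element $\sigma$ is a sum of $2^r$ squares as soon as the $(r+1)$-fold Pfister class attached to $\sigma$ vanishes in $H^{r+1}(F(\sqrt{-1}),\mathbb{Z}/2)$. This reduces each of the three bounds $p(F_i)\le 2^r$ (for $F_1=k(\!(x,y,z)\!)$, $F_2=k(\!(x,y)\!)(t)$, $F_3=k(\!(x,y)\!)(\!(z,t)\!)$) to the vanishing of all such classes over $F_i(\sqrt{-1})$. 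The hypothesis $p(k(x,y))\le 2^r$ is precisely the statement that these classes already vanish over $k(x,y)(\sqrt{-1})$; the role of Theorem~\ref{thmCohHasse1p2v3} is to transport this vanishing to the three larger fields.

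The main effort goes into $F_3$. Set $L=k(\!(x,y)\!)(\sqrt{-1})=k(\sqrt{-1})(\!(x,y)\!)$; then $F_3(\sqrt{-1})=\mathrm{Frac}(L[[z,t]])$, and $L[[z,t]]$ is a complete, hence henselian and excellent, equi-characteristic two-dimensional regular local ring. Choosing a regular model $\mathcal{X}'\to\Spec(L[[z,t]])$ with snc closed fiber (obtained by blowing up the maximal ideal), Theorem~\ref{thmCohHasse1p2v3} in the local case gives an injection
\[
H^{r+1}\bigl(F_3(\sqrt{-1}),\,\mathbb{Z}/2\bigr)\hookrightarrow\prod_{v\in(\mathcal{X}')^{(1)}}H^{r+1}\bigl(F_3(\sqrt{-1})_v,\,\mathbb{Z}/2\bigr)\,.
\]
It therefore suffices to show that each Pfister class attached to a sum of squares in $F_3$ vanishes locally at every $v$. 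Since each $F_3(\sqrt{-1})_v$ is a complete discretely valued field of residue characteristic $\ne 2$, Springer's theorem bounds its Pythagoras number by that of the residue field $\kappa(v)$, so one only has to show $p(\kappa(v))\le 2^r$. The residue fields are one-variable function fields over $L$ (when $v$ is a vertical divisor) or complete DVFs over one-dimensional residue fields of $L[[z,t]]$ (when $v$ is horizontal). An iterated application of Springer's theorem, combined with the bound on $p(k(\!(x,y)\!))$ from \cite{Hu13} and the bound on $p(k(\!(x)\!)(t))$ from \cite{BGvG12}, then reduces everything to the hypothesis $p(k(x,y))\le 2^r$.

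The chain $p(F_1)\le p(F_2)\le p(F_3)$ follows by combining Cassels--Pfister descent along the purely transcendental layers with Springer's theorem along the complete Laurent series layers, applied to the inclusions $F_1\subset k(\!(x,y)\!)(\!(z)\!)$ and $F_2\subset F_3$. For the equality statement, the matching lower bound $p(F_i)\ge p(k(x,y))=2^r$ comes from the inclusion $k(x,y)\hookrightarrow F_i$ together with a place of $F_i$ specializing back onto $k(x,y)$, built from rational substitutions in the complete-series variables. The principal technical obstacle I expect is in the second paragraph: one must verify that the specific Pfister class attached to a given sum of squares $\sigma\in F_3$ restricts at every codim-one point of $\mathcal{X}'$ to a class whose vanishing is already guaranteed by the Pythagoras bounds on the residue field, and that the iterated application of Theorem~\ref{thmCohHasse1p2v3} (once over $L[[z,t]]$, and once over $k(\sqrt{-1})[[x,y]]$ inside the residue analysis) can be performed so as to preserve this specific class through the residue maps in Galois cohomology.
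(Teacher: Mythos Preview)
Your opening reduction contains a genuine error: the ``$(r+1)$-fold Pfister class attached to $\sigma$'' is $(-1)^r\cup(\sigma)\in H^{r+1}(F,\mathbb{Z}/2)$, and this class becomes \emph{automatically} zero upon restriction to $F(\sqrt{-1})$, since $(-1)$ is a square there. So its vanishing in $H^{r+1}(F(\sqrt{-1}),\mathbb{Z}/2)$ carries no information whatsoever about $p(F)$, and the Hasse principle over $L[[z,t]]$ with $L=k(\!(x,y)\!)(\sqrt{-1})$ proves nothing. You may be conflating this with Pfister's bound ``$I^{r+1}(F(\sqrt{-1}))=0\Rightarrow p(F)\le 2^r$'', but that requires the vanishing of \emph{all} classes in $H^{r+1}(F(\sqrt{-1}),\mathbb{Z}/2)$, which is far stronger than the hypothesis and has no reason to hold here.

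Even after removing the $\sqrt{-1}$, there is a structural problem with applying Theorem~\ref{thmCohHasse1p2v3} over $k(\!(x,y)\!)[[z,t]]$: the vertical divisors have residue fields that are one-variable function fields over $k(\!(x,y)\!)$, so the local check for $p(F_3)\le 2^r$ already requires $p(k(\!(x,y)\!)(t))=p(F_2)\le 2^r$. Your proposed second application of the Hasse principle ``over $k[[x,y]]$'' cannot fix this directly, because $k(\!(x,y)\!)(t)$ is not the fraction field of a two-dimensional local ring. The paper resolves this by a different reduction that you are missing: it targets $F_2$ rather than $F_3$, and uses Pfister's theorem (Theorem~\ref{thmCohHasse4p2v3}) to trade the bound $p(k(\!(x,y)\!)(t))\le 2^r$ for the condition that $(-1)^r=0$ in $H^r(K,\mathbb{Z}/2)$ for every finite nonreal extension $K$ of $k(\!(x,y)\!)$. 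Each such $K$ \emph{is} the fraction field of a two-dimensional henselian local domain (the integral closure of $k[[x,y]]$ in $K$), with residue field finite over $k$. Theorem~\ref{thmCohHasse1p2v3} applied here reduces to checking the level of residue fields of the form $k'(\!(t)\!)$ or function fields in one variable over $k$, and both are controlled directly by the hypothesis $p(k(x,y))\le 2^r$ via Pfister's theorem again. The chain $p(F_1)\le p(F_2)\le p(F_3)$ and the lower bound $p(k(x,y))\le p(F_1)$ are then quoted from \cite{Hu13} and \cite{CDLR}.
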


There are two sample cases to which the theorem applies: $k$ is finitely generated over $\mathbb{R}$ or $\mathbb{Q}$. In both cases our theorem yields sharper bounds than Pfister's (cf. Corollary$\;$\ref{corCohHasse4p5v3}). Note also that the cohomological dimension of $k(\sqrt{-1})$ increases by $m$ when $k$ is replaced with an $m$-fold iterated Laurent series field $k_m=k(\!(t_1)\!)\cdots (\!(t_m)\!)$. So Pfister's bound for
$p(k_m(\!(x,\,y,\,z)\!))$ grows rapidly as $m$ goes to infinity. Our hypothesis $p(k(x,\,y))\le 2^r$ in  Theorem$\;\ref{thmCohHasse1p4v3}$ is insensible to this procedure. Therefore, we get the same bound for $p(k_m(\!(x,\,y,\,z)\!))$ (cf. Corollary$\;$\ref{coroCohHasse4p7v3}).

Together with earlier results in \cite{CDLR}, Theorem$\;\ref{thmCohHasse1p4v3}$ suggests that at least for $n\le 3$,  upper bound estimates
of the Pythagoras number of a Laurent series field in $n$ variables can be reduced to the case of a rational function field in $n-1$ variables. This is consistent in philosophy with Conjecture$\;$5.4 of \cite{Hu13}. (That conjecture combined with \cite[Conjecture\;4.15]{BGvG12} will imply $p(k(\!(x,\,y,\,z)\!))=p(k(x,\,y))$ in the general case.)

\section{Proof of the main theorem}\label{sec2}

Given a field $F$ and an integer $r\ge 0$, let $\mathrm{K}_r^M(F)$ denote the $r$-th Milnor $K$-group of $F$. For any integer $n$, we write
$\mathrm{K}^M_r(F)/n$ for $\mathrm{K}^M_r(F)\otimes(\mathbb{Z}/n\mathbb{Z})$. When $r\ge 1$, the group $\mathrm{K}^M_{r}(F)/n$ is
generated by symbol classes, i.e., elements of the form
\[
\bar{a}_1\otimes \cdots\otimes \bar{a}_{r}\,,
\]where $\bar{a}_i\in \mathrm{K}^M_1(F)/n=F^*/F^{*n}$.

The proof of our main theorem will use the following $K$-theoretic fact, which should be well known to experts.

\begin{lemma}\label{lemmaCohHasse2p1v3}
  Let $v_1,\dotsc, v_m$ be a finite collection of independent discrete valuations on a field $F$. Denote by $F_i$ the henselization of $F$ at $v_i$ for each $i$. Let $n\ge 1$ be an integer that is invertible in each of the residue fields $\kappa(v_i)$. Then for every $r\ge 1$, the natural map
\[
g_r\,:\;\; \mathrm{K}^M_{r}(F)/n\lra \bigoplus_{i}\mathrm{K}^M_{r}(F_i)/n
\]is surjective.
\end{lemma}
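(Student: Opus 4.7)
The plan is to combine weak approximation on $F$ with henselian lifting of $n$-th powers. Since for each $r \geq 1$ the group $\mathrm{K}^M_r(F_i)/n$ is generated by symbol classes and the image of $g_r$ is a subgroup of the direct sum, surjectivity of $g_r$ reduces to the following local assertion: for each index $i_0$ and each symbol $\{a_1, \ldots, a_r\}$ in $\mathrm{K}^M_r(F_{i_0})/n$, there exists $\xi \in \mathrm{K}^M_r(F)/n$ mapping to $\{a_1, \ldots, a_r\}$ in $\mathrm{K}^M_r(F_{i_0})/n$ and to $0$ in $\mathrm{K}^M_r(F_i)/n$ for every $i \neq i_0$.

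Two inputs yield this local assertion. First, since $F_i$ is henselian and $n$ is invertible in $\kappa(v_i)$, Hensel's lemma applied to $X^n - u$ (which has $X = 1$ as an approximate root with invertible derivative $n$) shows that every element of $1 + \mathfrak{m}_i$ is an $n$-th power in $F_i^\ast$; in particular, any symbol in $\mathrm{K}^M_r(F_i)/n$ one of whose entries lies in $1 + \mathfrak{m}_i$ is trivial. Second, the classical weak approximation theorem for the finite family of pairwise independent valuations $v_1, \ldots, v_m$ says that $F$ is dense in $\prod_i F_i$ for the product of the $v_i$-adic topologies. So for each $j = 1, \ldots, r$ one can choose $b_j \in F^\ast$ such that $b_j/a_j \in 1 + \mathfrak{m}_{i_0}$ in $F_{i_0}$ and $b_j \in 1 + \mathfrak{m}_i$ in $F_i$ for every $i \neq i_0$; the nonzeroness of $b_j$ is automatic since each approximation forces $b_j$ into a fixed nonzero class modulo the corresponding maximal ideal.

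Combining the two inputs, the symbol $\xi := \{b_1, \ldots, b_r\} \in \mathrm{K}^M_r(F)/n$ maps to $\{a_1, \ldots, a_r\}$ in $\mathrm{K}^M_r(F_{i_0})/n$ (since each $b_j / a_j$ is an $n$-th power there) and to $0$ in each $\mathrm{K}^M_r(F_i)/n$ for $i \neq i_0$ (since each $b_j$ is itself an $n$-th power there), which proves the lemma. No step presents a real obstacle: the content is a routine combination of Hensel's lemma with weak approximation, the only mildly delicate point being the multiplicative approximation of $a_j$ in $F_{i_0}^\ast$ despite it possibly having nonzero $v_{i_0}$-valuation, which is handled by approximating inside the coset $a_j (1 + \mathfrak{m}_{i_0})$.
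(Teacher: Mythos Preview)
Your proof is correct and uses the same two ingredients as the paper (weak approximation plus Hensel's lemma to show $1+\mathfrak{m}_i\subseteq F_i^{*n}$). The only cosmetic difference is in the reduction: the paper first establishes surjectivity of $g_1$ by approximating a full tuple $(a_i)_i$ simultaneously and then lifts arbitrary sums of symbols entrywise, whereas you fix one coordinate $i_0$ at a time and hit a single symbol there while killing the others; the underlying computation is identical.
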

\begin{proof}
 We first prove the surjectivity of $g_1$.  Indeed,  by the weak approximation property for a finite number of discrete valuations, for any $(a_i)\in \bigoplus_iF_i^*$ there exists an element $a\in F$ such that $v_i(a-a_i)>v_i(a_i)$ for each $i$. Then $a/a_i$ is a unit for $v_i$ and its canonical image in $\kappa(v_i)$ is 1. By the invertibility assumption on $n$ and Hensel's lemma, we have $a=a_i\in \mathrm{K}_1^M(F_i)/n=F^*_i/F_i^{*n}$ for every $i$, whence the claimed surjectivity.

Now consider the general case. For any $\alpha=(\alpha_i)\in\bigoplus \mathrm{K}_{r}^M(F_i)/n$, one can choose an integer $N\ge 1$ possibly depending on $\alpha$, such that each $\alpha_i$ is the sum of $N$ symbols
\[
s_{i,\,j}\in \mathrm{K}_{r}^M(F_i)/n\,,\qquad 1\le j\le N\,.
\]By the surjectivity of $g_1$, there exist symbols $s_1,\dotsc, s_N\in \mathrm{K}^M_{r}(F)/n$   such that for every $i$ and every $j$, the image of $s_j$ in $\mathrm{K}_{r}^M(F_i)/n$ coincides with $s_{i,\,j}$. Thus, the map
$g_r$ sends $a:=\sum_js_j$ to $\alpha$, showing that $g_{r}$ is surjective.
\end{proof}

As already observed in \cite[Prop.$\;$3.3.4]{HHK12}, the following lemma is an easy consequence of Panin's work  \cite{Panin03} on Gersten's conjecture for equi-characteristic regular local rings.

\begin{lemma}\label{lemmaCohHasse2p2v3}
Let $R$ be an equi-characteristic henselian regular local ring with fraction field $F$ and  $m>0$ an integer invertible in $R$. Let $R^{(1)}$ denote the set of discrete valuations of $F$ corresponding to codimension $1$ points of $\Spec(R)$. For each $v\in R^{(1)}$, denote by $F_{(v)}$ the henselization of $F$ at $v$ and $\kappa(v)$ the residue field of $v$.
Let $\pi\in R$ be a regular parameter in $R\,($i.e. $\pi$ is an element of a regular system of parameters of the regular local ring $R)$, identified with the element of $R^{(1)}$ defined by the prime ideal $\pi R$ of $R$.

Then for any integers $r,\,j\ge 1$, the natural map
\[
(\rho,\,\partial)\,: \;\; H^r(F\,,\,\mathbb{Z}/m(j))\lra H^r(F_{(\pi)}\,,\,\mathbb{Z}/m(j))\oplus \bigoplus_{v\in R^{(1)}\,,\,v\neq \pi} H^{r-1}(\kappa(v)\,,\,\mathbb{Z}/m(j-1))
\]is injective, where $\rho: H^r(F,\,\mathbb{Z}/m(j))\to H^r(F_{(\pi)}\,,\,\mathbb{Z}/m(j))$ is the natural map induced by the inclusion $F\subseteq F_{(\pi)}$ and
\[
\partial\,:\; H^r(F\,,\,\mathbb{Z}/m(j))\lra  \bigoplus_{v\in R^{(1)}\,,\,v\neq \pi} H^{r-1}(\kappa(v)\,,\,\mathbb{Z}/m(j-1))
\]is induced by the residue maps.
\end{lemma}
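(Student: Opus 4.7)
\emph{Plan of proof.} The strategy is to combine the Gersten conjecture for \'etale cohomology over equi-characteristic regular local rings---which, as already observed in \cite[Prop.$\;$3.3.4]{HHK12}, follows from Panin's theorem \cite{Panin03} together with the Bloch--Kato conjecture---with a short reduction that isolates the prime $\pi$. First I would record the Gersten exact sequence
\[
0 \lra H^r(R,\,\mathbb{Z}/m(j)) \lra H^r(F,\,\mathbb{Z}/m(j)) \xrightarrow{\oplus_v\partial_v} \bigoplus_{v\in R^{(1)}} H^{r-1}(\kappa(v),\,\mathbb{Z}/m(j-1)),
\]
and, using that $R$ is henselian, identify $H^r(R,\,\mathbb{Z}/m(j))$ with $H^r(k,\,\mathbb{Z}/m(j))$, where $k$ denotes the residue field of $R$.

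Now suppose $\alpha\in H^r(F,\,\mathbb{Z}/m(j))$ lies in the kernel of $(\rho,\,\partial)$. Compatibility of the residue map with henselization gives $\partial_\pi(\alpha) = \partial_\pi(\rho(\alpha)) = 0$; combined with the hypothesis $\partial_v(\alpha) = 0$ for every $v\neq \pi$, the displayed Gersten sequence then produces a class $\beta\in H^r(k,\,\mathbb{Z}/m(j))$ whose image in $H^r(F,\,\mathbb{Z}/m(j))$ equals $\alpha$. It remains to show $\beta = 0$ knowing only that $\rho(\alpha) = 0$. The composite $H^r(k,\,\mathbb{Z}/m(j)) \to H^r(F,\,\mathbb{Z}/m(j)) \xrightarrow{\rho} H^r(F_{(\pi)},\,\mathbb{Z}/m(j))$ factors through the henselization of $R$ at the prime $(\pi)$, which is a henselian discrete valuation ring with residue field $\kappa(\pi)$; this yields a factorization
\[
H^r(k,\,\mathbb{Z}/m(j)) \lra H^r(\kappa(\pi),\,\mathbb{Z}/m(j)) \hookrightarrow H^r(F_{(\pi)},\,\mathbb{Z}/m(j)),
\]
in which the second arrow is the standard specialization map of a henselian DVR with residue characteristic coprime to $m$, hence injective. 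Since $\pi$ belongs to a regular system of parameters of $R$, the quotient $R/(\pi)$ is itself an equi-characteristic henselian regular local ring with fraction field $\kappa(\pi)$ and residue field $k$; applying Gersten's conjecture to $R/(\pi)$ shows that $H^r(k,\,\mathbb{Z}/m(j))\hookrightarrow H^r(\kappa(\pi),\,\mathbb{Z}/m(j))$ is injective. Hence $\rho(\alpha) = 0$ forces $\beta = 0$, and so $\alpha = 0$.

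Once the Gersten conjecture for \'etale cohomology over equi-characteristic regular local rings is granted, the argument is purely formal; in my view the only points requiring care are the compatibility of residue maps with henselization (so that $\rho(\alpha) = 0$ kills $\partial_\pi(\alpha)$ automatically) and the observation that restriction from $k$ to $F_{(\pi)}$ factors through $\kappa(\pi)$, allowing a second, auxiliary application of Gersten to $R/(\pi)$ to kill the residual class $\beta$. No genuinely new ingredient beyond Panin's theorem and Bloch--Kato is required.
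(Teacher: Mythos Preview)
Your proposal is correct and follows essentially the same route as the paper: lift $\alpha$ to $H^r(R,\mathbb{Z}/m(j))\cong H^r(k,\mathbb{Z}/m(j))$ via the Gersten sequence from Panin's theorem, then kill the lifted class by factoring through $H^r(\kappa(\pi),\mathbb{Z}/m(j))$ and applying Gersten once more to $R/(\pi)$ together with the injectivity of the specialization map for the henselian discrete valuation ring $R^h_{(\pi)}$. One small remark: the Bloch--Kato conjecture is not needed here---Panin's theorem applies directly to \'etale cohomology with $\mathbb{Z}/m(j)$ coefficients---so you may drop that clause.
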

\begin{proof}
Put $M=\mathbb{Z}/m(j)$. Let $\alpha\in H^r(F\,,\,M)$ be such that $\rho(\alpha)=0=\partial(\alpha)$. By \cite[$\S$5, Thm.$\;$C]{Panin03}, we have an exact sequence of \'etale cohomology groups
\[
0\lra H^r(R\,,\,M)\overset{\iota}{\lra} H^r(F\,,\,M)\lra\bigoplus_{v\in R^{(1)}}H^{r-1}(\kappa(v)\,,\,M(-1))\,.
\]For each $v\in R^{(1)}$, the map $H^r(F,\,M)\to H^{r-1}(\kappa(v)\,,\,M(-1))$ factors through the natural map
$H^r(F\,,\,M)\to H^r(F_{(v)}\,,\,M)$. So $\alpha$ maps to 0 in each $H^{r-1}(\kappa(v)\,,\,M(-1))$ and thus comes from an element $\tilde{\alpha}\in H^r(R\,,\,M)$.

Denote by $R^h_{\pi}$ the henselization of $R$ at $\pi$. Its residue field $\kappa(\pi)$ is the fraction field of the quotient ring $\ov{R}_{\pi}=R/(\pi)$, which is a henselian regular local ring with the same residue field as $R$.  (Here of course $\overline{R}_{\pi}$ is  a field itself if $\dim R=1$. In this case we use the convention that a field is a henselian local ring.) Panin's theorem (\cite[$\S$5, Thm.$\;$C]{Panin03}) applied to $\ov{R}_{\pi}$ implies in particular that the natural map $\iota_{\pi}\,:\;H^r(\ov{R}_{\pi}\,,\,M)\to H^r(\kappa(\pi)\,,\,M)$ is injective. On the other hand, by the proper base change theorem we have natural identifications
\[
H^r(R\,,\,M)=H^r(k\,,\,M)=H^r(\ov{R}_{\pi}\,,\,M)\,,
\]where $k$ denotes the residue field of $R$. Now consider the following two commutative diagrams
\[
\begin{CD}
H^r(R,\,M) @>{\varphi_{\pi}}>> H^r(R^h_{\pi}\,,\,M)\\
@V{\cong} VV @VVV\\
H^r(\ov{R}_{\pi}\,,\,M) @>{\iota_{\pi}}>> H^r(\kappa(\pi)\,,\,M)
\end{CD}\quad\text{ and }\quad
\begin{CD}
H^r(R,\,M) @>{\varphi_{\pi}}>> H^r(R^h_{\pi}\,,\,M)\\
@V{\iota} VV @VV{\tau_{\pi}}V\\
H^r(F\,,\,M) @>{\phi_{\pi}}>> H^r(F_{(\pi)}\,,\,M)
\end{CD}
\]From the left diagram we see that $\varphi_{\pi}$ is injective, and the right diagram shows that
\[
0=\phi_{\pi}(\alpha)=\phi_{\pi}(\iota(\tilde{\alpha}))=\tau_{\pi}(\varphi_{\pi}(\tilde{\alpha}))\,.
\]But we know that $\tau_{\pi}$ is injective (by the discrete valuation ring case of Gersten's conjecture, or by Panin's theorem cited above). So we get $\tilde{\alpha}=0$ and hence $\alpha=0$.
\end{proof}

\newpara\label{paraCohHasse2p3v3}
Recall some  important facts about hypercohomology, as will be used in our proof of Theorem$\;$\ref{thmCohHasse1p2v3}.

Let $X$ be a noetherian scheme. Let $D(X)$ denote the derived category of complexes of \'etale sheaves of abelian groups on $X$, and let $D_+(X)\subseteq D(X)$ be the full subcategory of complexes that are bounded below. The global section functor on \'etale sheaves admits a derived functor $R\Gamma: D_+(X)\to D_+(\mathbf{Ab})$, where $D_+(\mathbf{Ab})$ denotes the derived category of complexes bounded below of abelian groups. For a complex $\mathscr{F}\in D_+(X)$, the \emph{hypercohomology groups} of $\mathscr{F}$, denoted $H^i(X,\,\mathscr{F})$, are defined as the cohomology groups of the complex $R\Gamma(\mathscr{F})$, i.e.,
\[
H^i(X,\,\mathscr{F}):=H^i(R\Gamma(\mathscr{F}))\,,\quad \forall\; i\in \mathbb{Z}\,.
\]If the complex $\mathscr{F}$  is given by a sheaf concentrated in degree 0, then the hypercohomology groups are the usual cohomology groups.

Let $j: V\hookrightarrow X$ be an open immersion. The direct image functor $j_*$ has a derived functor $Rj_*: D_+(V)\to D_+(X)$. Since $j_*$ sends injectives to injectives, one has (by \cite[p.308, Chap. C.D., $\S$2.3, Prop.$\;$3.1]{SGA4.5})
$R(\Gamma\circ j_*)=R\Gamma\circ Rj_*$. Thus, for a complex $\mathscr{M}\in D_+(V)$,
\begin{equation}\label{eqCohHasse2p3p1v3}
H^i(X,\,Rj_*\mathscr{M})=H^i(R\Gamma\circ Rj_*(\mathscr{M}))=H^i(R(\Gamma\circ j_*)\mathscr{M})=H^i(V,\,\mathscr{M})\,
\end{equation}for all $i$.

Let $Z\subseteq X$ be a closed subscheme. For any sheaf $\mathscr{M}$ on $X$, put
\[
\Gamma_Z(\mathscr{M}):=\ker(\Gamma(X,\,\mathscr{M})\longrightarrow \Gamma(X\setminus Z\,,\,\mathscr{M}))\,.
\]This functor admits a derived functor $R\Gamma_Z: D_+(X)\to D_+(\mathbf{Ab})$. One defines the \emph{hypercohomology groups with support} in $Z$ of a complex $\mathscr{F}\in D_+(X)$ by
\[
H^i_Z(X,\,\mathscr{F}):=H^i(R\Gamma_Z(\mathscr{F}))\,.
\]For the same reason as above, for any $\mathscr{M}\in D_+(V)$ one has
\begin{equation}\label{eqCohHasse2p3p2v3}
  H^i_Z(X,\,Rj_*\mathscr{M})=H^i_{Z\cap V}(V,\,\mathscr{M})
\end{equation}for all $i\in \mathbb{Z}$.

\begin{notation}\label{notationCohHasse2p4v3}

Let $A,\,\mathcal{X}$ and so on be as in Notation\;\ref{notationCohHasse1p1v3}. To prove the main theorem we introduce some more notation:
\begin{enumerate}
  \item[(1)] $U:=\mathcal{X}\setminus Y$, where $Y\subseteq \mathcal{X}$ is the reduced closed fiber of $\pi: \mathcal{X}\to \Spec(A)$. The natural inclusion $j: U\hookrightarrow \mathcal{X}$ is the complement
of the closed immersion $i: Y\to \mathcal{X}$. In the semi-global case, $U$ is the generic fiber of $\pi$. In the local case, $\pi$ induces an isomorphism from $U$ to $\Spec(A)\setminus\{[\mathfrak{m}_A]\}$, where $[\mathfrak{m}_A]$ denotes the closed point of $\Spec(A)$.

  \item[(2)]  $Y_0$ denotes the set of closed points of $Y$ and $Y_1$ denotes the set of generic points (of the irreducible components) of $Y$.

  \item[(3)] $P:=\mathcal{X}^{(1)}\setminus Y_1$, the set of codimension 1 points of $\mathcal{X}$ outside $Y$.  One can identify $P$ with the set of closed points of $U$.  In the local case, $\pi$ maps $P$ bijectively onto $(\Spec(A))^{(1)}$.

 \item[(4)] For each $p\in P$, let $\bar p$ denote the scheme-theoretic closure of $p$ in $\mathcal{X}$. Then $\bar{p}\cong \Spec(B)$ for some henselian local domain $B$ with fraction field $\mathrm{Frac}(B)=\kappa(p)$, and therefore, $\bar p$ meets $Y$ at one and only one point (which is the unique closed point of $\bar{p}$).

 To see this, first note that $\bar{p}$ is an integral scheme.

 In the semi-global case,  the structural morphism $\pi: \mathcal{X}\to\Spec(A)$ induces a finite morphism $\bar{p}\to \Spec(A)$. Hence $\bar{p}$ is affine. Since $A$ is henselian and local, any integral finite $A$-algebra is henselian and local.

 In the local case, $p$ can be viewed as a height 1 prime ideal of $A$ and $\pi$ induces a finite birational morphism  $\bar{p}\to \Spec(A/p)$. The quotient $A/p$ is a henselian local domain, so the  assertion follows by the same argument as above.

  \item[(5)]
 For $x\in \mathcal{X}$, we denote by $A_{(x)}$ the henselization of the regular local ring $\mathscr{O}_{\mathcal{X},\,x}$ and $K_{(x)}:=\mathrm{Frac}(A_{(x)})$ the fraction field of $A_{(x)}$.

  \item[(6)] For any $x\in Y_0\subseteq\mathcal{X}$, put  $P_x:=\set{p\in P\,|\, x\in \bar p}$.

  \item[(7)] For a fixed point $x\in Y_0\subseteq \mathcal{X}$, let $\varphi=\varphi_x: \Spec(A_{(x)})\to \mathcal{X}$ be the canonical morphism. We say a codimension 1 point $p_x$ of $\Spec(A_{(x)})$ is \emph{vertical} if  $\varphi(p_x)\in Y_1$, i.e., if the prime ideal $p_x\cap \mathscr{O}_{\mathcal{X},\,x}$ of $\mathscr{O}_{\mathcal{X},\,x}$ corresponds to an irreducible component of the reduced closed fiber $Y$. Otherwise we say $p_x\in (\Spec(A_{(x)}))^{(1)}$ is \emph{horizontal}, namely, $p_x$ is horizontal if $\varphi(p_x)\in P$. We denote by $V_x$ (resp. $H_x$) the set of vertical (resp. horizontal) points of $\Spec(A_{(x)})$.

Alternatively, vertical and horizontal points can be described as follows:

Let $R\mapsto R^h$ denote the henselization functor of local rings.  If $I\subseteq \mathscr{O}_{\mathcal{X},\,x}$ is the ideal defining $Y$, then
\[
\mathscr{O}_{Y,\,x}^h=(\mathscr{O}_{\mathcal{X},\,x}/I)^h=\mathscr{O}_{\mathcal{X},\,x}^h/I\mathscr{O}_{\mathcal{X},\,x}^h=A_{(x)}\otimes_{\mathscr{O}_{\mathcal{X},\,x}}\mathscr{O}_{Y,\,x}\,
\]by \cite[IV.18.6.8]{EGAivPIHES32}. So we have a cartesian diagram
\[
\begin{CD}
\Spec(\mathscr{O}_{Y,\,x}^h) @>>> \Spec(A_{(x)})\\
@VVV @VV{\varphi}V\\
Y @>>> \mathcal{X}
\end{CD}
\]
If $x$ is not a regular point of $Y$, $\Spec(\mathscr{O}_{Y,\,x}^h)$ is not integral. Note however that, since $Y$ is an snc divisor on the regular scheme $\mathcal{X}$, each irreducible component of $\Spec(\mathscr{O}_{Y,\,x}^h)$ is defined by a \emph{regular parameter} in the regular local ring $A_{(x)}=\mathscr{O}_{\mathcal{X},\,x}^h$.
Therefore,  the cardinality of $V_x$  is 1 or 2, depending on whether $x$ is a regular point of $Y$ or not.
We may thus identify   $V_x$ with $(\Spec(\mathscr{O}_{Y,\,x}^h))^{(0)}$, the set of generic points of $\Spec(\mathscr{O}_{Y,\,x}^h)$, which is mapped bijectively to
$(\Spec(\mathscr{O}_{Y,\,x}))^{(0)}$ via $\varphi$.

Now assume  $p\in P_x$, i.e., $x\in \bar p$. The affine coordinate ring $B$ of the scheme $\bar p$ is a henselian local domain. Considering $p$ as a prime ideal of $\mathscr{O}_{\mathcal{X},\,x}$, we have
\[
A_{(x)}/pA_{(x)}=\mathscr{O}_{\mathcal{X},\,x}^h/p.\mathscr{O}_{\mathcal{X},\,x}^h\cong (\mathscr{O}_{\mathcal{X},\,x}/p)^h=B^h=B\,.\]It follows that $A_{(x)}$ has a unique prime ideal lying over $p$, which is given by $p.A_{(x)}$. So $\varphi$ induces a bijection
\[
H_x:=\varphi^{-1}(P_x)\simto P_x\,.
\]

Now consider $\varphi^{-1}(U)=\Spec(A_{(x)})\times_{\mathcal{X}}U$. Since the complement $\varphi^{-1}(Y)=\Spec(\mathscr{O}_{Y,\,x}^h)$ is a principal divisor ($A_{(x)}$ being a regular local ring), $\varphi^{-1}(U)$ is an affine scheme. We denote its affine ring by $R_x$, so that there is a cartesian diagram
\[
\begin{CD}
\Spec(R_x) @>>> \Spec(A_{(x)})\\
@VVV @VV{\varphi}V\\
U @>>> \mathcal{X}
\end{CD}
\]The set $H_x$ can also be viewed as the set of closed points of the scheme $\varphi^{-1}(U)=\Spec(R_x)$.
\end{enumerate}
\end{notation}

We shall now start the proof of Theorem$\;$\ref{thmCohHasse1p2v3}.

First note that we may replace each completion $K_v$ by the corresponding henselization $K_{(v)}$. Indeed, for every $v\in \mathcal{X}^{(1)}$, $K_v$ is a separable extension of $K_{(v)}$, and the natural map between the Galois cohomology groups induced by this extension is injective (see e.g.
\cite[p.624, $\S$1.5, Lemma$\;$12]{KatoII80}).

With notation as above, we have $\mathcal{X}^{(1)}=Y_1\cup P$.   The kernel of the local-global map
\[
H^r(K,\,\mathbb{Z}/n(r-1))\lra \prod_{v\in \mathcal{X}^{(1)}}H^r(K_{(v)}\,,\,\mathbb{Z}/n(r-1))
\]is  contained in the kernel of the map
{\small\[
(\rho\,,\,\partial):\;\; H^r(K,\,\mathbb{Z}/n(r-1))\lra\bigoplus_{\eta\in Y_1}H^r(K_{(\eta)}\,,\,\mathbb{Z}/n(r-1))\oplus\bigoplus_{p\in P}H^{r-1}(\kappa(p),\,\mathbb{Z}/n(r-2))\,,
\]}where the map $\rho$ is induced by the natural maps
\[
H^r(K,\,\mathbb{Z}/n(r-1))\to H^r(K_{(\eta)}\,,\,\mathbb{Z}/n(r-1))
\] and the map $\partial$ is induced by the residue maps
\[
\partial_{p}\,: H^r(K,\,\mathbb{Z}/n(r-1))\lra H^{r-1}(\kappa(p)\,,\,\mathbb{Z}/n(r-2))\,.
\]
So Theorem$\;$\ref{thmCohHasse1p2v3} follows immediately from the following:

\begin{thm}\label{thmCohHasse2p5v3}
With notation and hypotheses as in Theorem$\;\ref{thmCohHasse1p2v3}$, the map
{\small\[
(\rho\,,\,\partial):\;\; H^r(K,\,\mathbb{Z}/n(r-1))\lra\bigoplus_{\eta\in Y_1}H^r(K_{(\eta)}\,,\,\mathbb{Z}/n(r-1))\oplus\bigoplus_{p\in P}H^{r-1}(\kappa(p),\,\mathbb{Z}/n(r-2))\,
\]}
is injective.
\end{thm}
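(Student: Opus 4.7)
The plan is to show that any $\alpha \in H^r(K, \mathbb{Z}/n(r-1))$ in the kernel of $(\rho,\partial)$ vanishes, in two stages: first locally enhance the hypothesis at each closed point of $Y$ using Lemma \ref{lemmaCohHasse2p2v3}, and then globalize via hypercohomology and proper base change, following the Saito-style strategy indicated in the introduction.

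For the first stage, fix $x \in Y_0$ and choose any vertical regular parameter $\pi \in V_x$ of the henselian regular local ring $A_{(x)}$ (a local equation for some irreducible component of $Y$ through $x$). I would apply Lemma \ref{lemmaCohHasse2p2v3} to $R = A_{(x)}$ with this $\pi$; to do so, I must check that $\alpha|_{K_{(x)}}$ maps to $0$ in each summand of the target. The component in $H^r(K_{(x),(\pi)}, \mathbb{Z}/n(r-1))$ vanishes because the valuation on $K_{(x)}$ defined by $\pi$ restricts to the valuation $v_\eta$ on $K$ for $\eta = \varphi_x(\pi) \in Y_1$, so the map $H^r(K,\cdot) \to H^r(K_{(x),(\pi)},\cdot)$ factors through $H^r(K_{(\eta)}, \cdot)$, which is $0$ by the kernel assumption. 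The residue at a vertical $v \in V_x \setminus \{\pi\}$ is handled identically, with $\eta' = \varphi_x(v)$ in place of $\eta$. The residue at a horizontal $v \in H_x$ is identified, using $\kappa(v) = \kappa(p)$ from Notation \ref{notationCohHasse2p4v3}(7) (where $p = \varphi_x(v) \in P$) and compatibility of residue maps with henselization, with $\partial_p\alpha$, which is $0$ by hypothesis. Lemma \ref{lemmaCohHasse2p2v3} then forces $\alpha|_{K_{(x)}} = 0$.

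For the second stage, combining the vanishings $\alpha|_{K_{(\eta)}} = 0$, $\partial_p\alpha = 0$, and $\alpha|_{K_{(x)}} = 0$, I would deduce $\alpha = 0$ using hypercohomology on $\mathcal{X}$. Lift $\alpha$ to $H^r(V,\mu_n^{\otimes(r-1)})$ for some open $V \subseteq \mathcal{X}$; iterated use of the Gysin--purity localization sequence together with $\partial_p\alpha = 0$ for each missing horizontal divisor allows one to enlarge $V$ to $U = \mathcal{X}\setminus Y$, so that $\alpha$ lifts to $H^r(U,\mu_n^{\otimes(r-1)}) = H^r(\mathcal{X}, Rj_*\mu_n^{\otimes(r-1)})$ via (\ref{eqCohHasse2p3p1v3}). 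Proper base change for $\pi\colon\mathcal{X}\to\Spec(A)$ (using that $A$ is henselian with closed fiber $Y$) identifies this with $H^r(Y, i^*Rj_*\mu_n^{\otimes(r-1)})$, a hypercohomology group on the one-dimensional scheme $Y$. I would then examine the local-to-global spectral sequence
\[
E_2^{p,q} = H^p\bigl(Y,\, \mathcal{H}^q(i^*Rj_*\mu_n^{\otimes(r-1)})\bigr) \Rightarrow H^{p+q}\bigl(Y,\, i^*Rj_*\mu_n^{\otimes(r-1)}\bigr),
\]
whose stalks at $\eta \in Y_1$ are computed by $H^\bullet(K_{(\eta)}, \mu_n^{\otimes(r-1)})$ and at $x \in Y_0$ by $H^\bullet(\Spec R_x^{sh},\mu_n^{\otimes(r-1)})$, with $R_x$ as in Notation \ref{notationCohHasse2p4v3}(7) (and $R_x^{sh}$ its strict-henselian analog). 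The hypothesis and Stage 1 kill the images of $\alpha$ in the generic-point stalks directly; at closed points one uses the natural injection $H^r(\Spec R_x^{sh},\mu_n^{\otimes(r-1)}) \hookrightarrow H^r(K_{(x)}^{sh},\mu_n^{\otimes(r-1)})$ coming from Gersten-type exactness for the regular ring $R_x^{sh}$ (again a consequence of Panin's theorem). Combined with the low cohomological dimension of $Y$, stalkwise vanishing on $Y$ should propagate through the spectral sequence to force $\alpha = 0$.

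The main obstacle is Stage 2: selecting (or constructing) the right étale complex on $\mathcal{X}$ so that its hypercohomology packages the map $(\rho,\partial)$ exactly, computing the stalks of $i^*Rj_*\mu_n^{\otimes(r-1)}$ on $Y$ in terms usable from Stage 1, and controlling the spectral sequence (or an equivalent devissage) tightly enough that stalkwise vanishing suffices for global vanishing. This is precisely the point where Saito's idea of combining closed-fiber and open-complement data replaces the patching machinery of \cite{HHK12}.
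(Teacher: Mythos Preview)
Your Stage~1 is correct and is in fact one of the two local ingredients in the paper's argument: applying Lemma~\ref{lemmaCohHasse2p2v3} to $A_{(x)}$ with a vertical parameter $\pi$ does force $\alpha|_{K_{(x)}}=0$, which in the paper's language is the injectivity of $\rho_x$ restricted to $\im(\phi_x)$ in diagram~\eqref{eqCohHasse2p5p3v3}.

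Stage~2, however, has a genuine gap. Your coniveau spectral sequence on $Y$ for $\mathscr{F}=i^*Rj_*\Lambda$ collapses (since $\dim Y=1$) to the localization long exact sequence that is precisely the bottom row of the paper's diagram~\eqref{eqCohHasse2p5p2v3}. Using $\partial_p\alpha=0$ to lift $\alpha$ to $\tilde\alpha\in H^r(U,\Lambda)\cong H^r(Y,\mathscr{F})$ and then $\rho(\alpha)=0$ to write $\tilde\alpha=\beta(\xi)$ with $\xi\in\bigoplus_{x}H^r_x(Y,\mathscr{F})$, your Stage~1 tells you only that each component $\xi_x$ lies in $\im(\gamma_x)+\ker(\beta_x)$. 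But $\ker(\beta_x)$ is the image of $\bigoplus_{\eta\in V_x}H^{r-1}(K_{(x,\eta)},\Lambda)$ from the \emph{local} sequence~\eqref{eqCohHasse2p5p3v3}, whereas the global $\ker(\beta)$ is the image of $\bigoplus_{\eta\in Y_1}H^{r-1}(K_{(\eta)},\Lambda)$; these do not agree, and your assertion that ``stalkwise vanishing propagates to global vanishing'' amounts to the injectivity of $H^r(Y,\mathscr{F})\to\prod_{y\in Y}H^r(\mathscr{O}^h_{Y,y},\mathscr{F})$, which is false in general (the residue fields of $Y$ may have arbitrary cohomological dimension, so ``$\dim Y=1$'' does not help here). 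Note also that the lift $\tilde\alpha$ is not unique, so even its vanishing is not the right target; one must show $\tilde\alpha$ lies in the image of the Gysin map from $P$.

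What is missing is exactly the second local input in the paper's proof, which your sketch never invokes: the surjectivity of
\[
g\colon H^{r-1}(K_{(x)},\Lambda)\lra\bigoplus_{\eta\in V_x}H^{r-1}(K_{(x,\eta)},\Lambda),
\]
obtained from Lemma~\ref{lemmaCohHasse2p1v3} together with the Bloch--Kato isomorphism $H^{r-1}(F,\mathbb{Z}/n(r-1))\cong\mathrm{K}^M_{r-1}(F)/n$. With $g$ surjective and $\rho_x|_{\im\phi_x}$ injective (your Stage~1), a diagram chase on~\eqref{eqCohHasse2p5p3v3} shows each $\gamma_x$ is \emph{surjective}; since the sets $P_x$ partition $P$, the global map $\gamma=\bigoplus_x\gamma_x$ is surjective, and then the chase on~\eqref{eqCohHasse2p5p2v3} forces $\alpha=0$. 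In short, your outline recovers the paper's framework but omits the Bloch--Kato step that bridges local and global kernels; without it the globalization in Stage~2 does not go through.
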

\begin{proof}Write $\Lambda=\mathbb{Z}/n(r-1)$. We may consider $P$ as the set of closed points of the scheme $U=\mathcal{X}\setminus Y$ (Notation\;\ref{notationCohHasse2p4v3} (3)), and we have the localization sequence of \'etale cohomology on $U$:
\begin{equation}\label{eqCohHasse2p5p1v3}
\bigoplus_{p\in P}H^r_p(U,\,\Lambda)\lra H^r(U,\,\Lambda)\lra H^r(K,\,\Lambda)\lra\bigoplus_{p\in P}H^{r+1}_p(U,\,\Lambda)
\end{equation}
By the absolute purity for discrete valuation rings (see e.g. \cite[p.139, Chap. Cycle, Prop.$\;$2.1.4]{SGA4.5}), we have
\[
H^d_{p}(U,\,\Lambda)=H^{d-2}(\kappa(p)\,,\,\Lambda(-1))\,,\quad \forall\; d\ge 0\,.
\]
Using this identification, the map $H^r(K,\,\Lambda)\to H^{r+1}_p(U,\,\Lambda)$ in \eqref{eqCohHasse2p5p1v3} can be identified with the residue map
$H^r(K,\,\Lambda)\to H^{r-1}(\kappa(p),\,\Lambda(-1))$ (cf. \cite[Lemma\;1.4 (2)]{Ka86}).

With natural inclusions $j: U\hookrightarrow \mathcal{X}$ and $i: Y\hookrightarrow\mathcal{X}$ as in Notation\;\ref{notationCohHasse2p4v3} (1), put $\mathscr{F}=i^*Rj_*\Lambda$. To avoid the need for patching techniques, we borrow from Saito's work \cite{Sai86} and \cite{Sai87} the idea of using the hypercohomology of $\mathscr{F}$. This gives rise to a commutative diagram with exact rows
{\small
\[
\xymatrix{
\bigoplus_{p\in P}H^r_{\bar p}(\mathcal{X},\,Rj_*\Lambda) \ar[d]_{} \ar[r]^{} & H^r(\mathcal{X},\,Rj_*\Lambda)\ar[d]_{} \ar[r]^{} & H^r(K,\,\Lambda) \ar[d]_{}
\ar[r]^{} & \bigoplus_{p\in P}H^{r+1}_{\bar p}(\mathcal{X},\,Rj_*\Lambda) \ar[d]_{} \\
\bigoplus_{x\in Y_0}H^r_{x}(Y,\,\mathscr{F}) \ar[r]^{} & H^r(Y,\,\mathscr{F})\ar[r]^{} & \bigoplus_{\eta\in Y_1}H^r(\kappa(\eta)\,,\,\mathscr{F})
\ar[r]^{} &\bigoplus_{x\in Y_0}H^{r+1}_{x}(Y,\,\mathscr{F})
}
\]
}where for each $p\in P$, $\bar p$ denotes its closure in $\mathcal{X}$. In this diagram, the second vertical arrow is an isomorphism by the proper base change theorem (cf. \cite[Exp.$\;$XII, Coro.$\;$5.5]{SGA4iii}).  The first row may be identified with the exact sequence in \eqref{eqCohHasse2p5p1v3}, by the functoriality of the functor $Rj_*$ (cf. \eqref{eqCohHasse2p3p1v3} and \eqref{eqCohHasse2p3p2v3}). For each $\eta\in Y_1$, there are canonical isomorphisms
\begin{equation}\label{eqcohHasse2p5p2new}
H^d(\kappa(\eta)\,,\,\mathscr{F})\cong H^d(K_{(\eta)}\,,\,\Lambda)\,,\quad\forall\; d\ge 0\,.
\end{equation}
In fact, $\eta\in Y_1$ can be considered as a codimension 1 point of $\mathcal{X}$. The henselization $\mathscr{O}_{\mathcal{X},\,\eta}^h$ of the local ring $\mathscr{O}_{\mathcal{X},\,\eta}$ is a henselian discrete valuation ring, and its fraction field is denoted by $K_{(\eta)}$. The proper base change theorem applied to the closed immersion $\mathrm{Spec}(\kappa(\eta))\to\mathrm{Spec}(\mathscr{O}_{\mathcal{X},\,\eta}^h)$ gives the isomorphism
\[
H^d(\kappa(\eta),\,\mathscr{F})\cong H^d(\mathscr{O}_{\mathcal{X},\,\eta}^h\,,\,Rj_*\Lambda)\;.
\]The group on the right hand side is isomorphic to $H^d(K_{(\eta)},\,\Lambda)$ by \eqref{eqCohHasse2p3p1v3}.

Putting all these together, we obtain a commutative diagram with exact rows
{\footnotesize
\begin{equation}\label{eqCohHasse2p5p2v3}
\xymatrix{
\bigoplus_{p\in P}H^{r-2}(\kappa(p),\,\Lambda(-1)) \ar[d]_{\gamma} \ar[r]^{\qquad\quad\alpha} & H^r(U,\,\Lambda)\ar[d]_{\cong} \ar[r]^{\phi} & H^r(K,\,\Lambda) \ar[d]_{\rho}
\ar[r]^{\partial\qquad} & \bigoplus_{p\in P}H^{r-1}(\kappa(p),\,\Lambda(-1)) \ar[d]_{} \\
\bigoplus_{x\in Y_0}H^r_{x}(Y,\,\mathscr{F}) \ar[r]^{\beta} & H^r(Y,\,\mathscr{F})\ar[r]^{\tau\qquad} & \bigoplus_{\eta\in Y_1}H^r(K_{(\eta)}\,,\,\Lambda)
\ar[r]^{\theta} &\bigoplus_{x\in Y_0}H^{r+1}_{x}(Y,\,\mathscr{F})
}
\end{equation}}

Here the maps $\rho$ and $\partial$ are concerned  with  (discrete valuations with center in) the closed fiber and its open complement respectively. We shall prove the desired injectivity by relating these maps to maps of more local nature via the above diagram. This method is the substitute for the Mayer--Vietoris machinery of \cite{HHK12} (e.g. \cite[Coro.$\;$3.1.6]{HHK12}). In fact, we need only to prove that the map $\rho$ is injective when restricted to $\ker(\partial)=\im(\phi)$. By an easy diagram chase, it suffices to show the surjectivity of the map
\[
\gamma\,:\; \bigoplus_{p\in P}H^{r-2}(\kappa(p),\,\Lambda(-1))=\bigoplus_{p\in P}H^r_{\bar p}(\mathcal{X},\,Rj_*\Lambda)\lra\bigoplus_{x\in Y_0}H^r_{x}(Y,\,\mathscr{F})\,.
\]
Here for each pair $(p,\,x)\in P\times Y_0$, the $(p,\,x)$-component $\gamma_{p,\,x}$ of $\gamma$ is 0 if $x$ does not lie in the closure $\bar p$ of $p$ in $\mathcal{X}$. Thus the map $\gamma$ decomposes into a direct sum
\[
\gamma=\bigoplus_{x\in Y_0}\left(\gamma_x:=\sum\gamma_{p,\,x}:\; \bigoplus_{p\in P_x}H^{r-2}(\kappa(p),\,\Lambda(-1))\lra H^r_{x}(Y,\,\mathscr{F})\right)\,.
\](Recall that $P_x=\set{p\in P\,|\, x\in \bar p}$.)
Therefore, the theorem will follow from the surjectivity of $\gamma_{x}$ for every $x\in Y_0$.

Notice that by considering the localization sequences on $\Spec(R_x)=\Spec(A_{(x)})\times_{\mathcal{X}}U$ and on $\Spec(\mathscr{O}^h_{Y,\,x})=\Spec(A_{(x)})\times_{\mathcal{X}}Y$ (cf. Notation\;\ref{notationCohHasse2p4v3} (7)), we have as an analog of \eqref{eqCohHasse2p5p2v3} the following commutative diagram with exact rows
{\small
\begin{equation}\label{eqCohHasse2p5p3v3}
\begin{split}
&\xymatrix{
H^{r-1}(K_{(x)},\,\Lambda) \ar[r]_{} \ar[d]_{g} & \bigoplus_{p\in H_x}H^{r-2}(\kappa(p),\,\Lambda(-1)) \ar[d]_{\gamma_x} \ar[r]^{\qquad\quad\alpha_x} & H^r(R_x,\,\Lambda)\ar[d]_{\cong} \ar[r]^{\qquad\phi_x} & \\
\bigoplus_{\eta\in V_x}H^{r-1}(K_{(x,\,\eta)}\,,\,\Lambda) \ar[r]^{} & H^r_{x}(Y,\,\mathscr{F}) \ar[r]^{\beta_x} & H^r(\mathscr{O}_{Y,\,x}^h,\,\mathscr{F})\ar[r]^{\qquad\quad\tau_x} &
}\\
&\xymatrix{
\ar[r]^{\phi_x\qquad} & H^r(K_{(x)},\,\Lambda) \ar[d]_{\rho_x}
\ar[r]^{\partial_x\qquad} &  \bigoplus_{p\in H_x}H^{r-1}(\kappa(p),\,\Lambda(-1)) \ar[d]_{} \ar[r]_{} &  \cdots \\
\ar[r]^{\tau_x\qquad\qquad} & \bigoplus_{\eta\in V_x}H^r(K_{(x,\,\eta)}\,,\,\Lambda) \ar[r]^{\theta_x} & H^{r+1}_{x}(Y,\,\mathscr{F}) \ar[r]^{} & \cdots
}\end{split}
\end{equation}
}($H_x$ and $V_x$ are the sets of horizontal and vertical points respectively, as defined in Notation\;\ref{notationCohHasse2p4v3} (7).) Here for each $\eta\in V_x$, $K_{(x,\,\eta)}$ is the henselization of $K_{(x)}=\mathrm{Frac}(A_{(x)})$ with respect to the discrete valuation corresponding to $\eta$. More precisely, the field $K_{(x,\,\eta)}$ is defined as follows: We localize $A_{(x)}$ at $\eta$ (considered as a codimension 1 point of $\mathrm{Spec}(A_{(x)})$) and let $A_{(x,\,\eta)}$ be the henselization of the localization. Then denote by $K_{(x,\,\eta)}$ the fraction field of $A_{(x,\,\eta)}$.

Note that in the bottom row of \eqref{eqCohHasse2p5p3v3}, the group $H^r(\mathscr{O}^h_{Y,\,x},\,\mathscr{F})$ plays the role of the group $H^r(Y,\,\mathscr{F})$ in \eqref{eqCohHasse2p5p2v3}, and we have $H^r_x(Y,\,\mathscr{F})=H^r_x(\mathscr{O}_{Y,\,x}^h,\,\mathscr{F})$ by \cite[p.93, Cor.\;III.1.28]{Mil80}. (So the map $\beta_x$ in \eqref{eqCohHasse2p5p3v3} is the local variant of the map $\beta$ in \eqref{eqCohHasse2p5p2v3}. And similarly for the maps $\tau_x$ and $\theta_x$.) That the cohomology of $\mathscr{F}$ on $\kappa(\eta)$ can be replaced by the cohomology of $\Lambda$ on $K_{(x,\,\eta)}$ is exactly an analog of the  isomorphism in \eqref{eqcohHasse2p5p2new}.

\

By the Bloch-Kato conjecture, which becomes a theorem thanks to the work of Rost, Voevodsky et al (see \cite{Voe11BKconj}), for any field $F$ of characteristic not dividing $n$, one has a canonical isomorphism
\[
H^{r-1}(F\,,\,\Lambda)=H^{r-1}(F,\,\mathbb{Z}/n(r-1))\cong \mathrm{K}^M_{r-1}(F)/n\,.
\]Since $r\ge 2$, it follows from Lemma\;\ref{lemmaCohHasse2p1v3} that the map $g$ in diagram \eqref{eqCohHasse2p5p3v3} is surjective (keeping in mind that the invertibility condition on $n$ is contained in Notation\;\ref{notationCohHasse1p1v3}).

On the other hand, since $Y$ is an snc divisor on $\mathcal{X}$, each of its irreducible components is locally defined by a regular parameter. So we may apply
Lemma$\;$\ref{lemmaCohHasse2p2v3} to the local ring $A_{(x)}$ together with the maps $\rho_x$ and $\partial_x$ in \eqref{eqCohHasse2p5p3v3}. Thus, we obtain
\[
\ker(\rho_x)\cap\im(\phi_x)=\ker(\rho_x)\cap \ker(\partial_x)=0\,.
\]Hence, the induced map $\rho_x: \im(\phi_x)\to \im(\tau_x)$ is injective.
This  injectivity combined with the the surjectivity of the map $g$ implies that  $\gamma_x$ is surjective, as can be shown by an easy diagram chase. This completes the proof of the theorem.
\end{proof}

\begin{remark}\label{remarkCohHasse2p6v3}
\

(1) The statement of the main theorem (Theorem$\;$\ref{thmCohHasse1p2v3}) does not hold if $r=1$. Counterexamples can be found in \cite[$\S$6]{CTPaSu} in the semi-global case and in \cite[Thm.$\;$1.5]{Ja} or \cite[Remark$\;$3.3]{CTOP} in the local case.

(2) In the proof of Theorem$\;$\ref{thmCohHasse2p5v3}, the only place where we have relied on the equi-characteristic assumption is the following version of Gersten's conjecture:
The complex
\[
0\lra H^r(A_{(x)}\,,\,\mathbb{Z}/n(r-1))\overset{\iota}{\lra} H^r(K_{(x)}\,,\,\mathbb{Z}/n(r-1))\lra\bigoplus_{v\in A_{(x)}^{(1)}}H^{r-1}(\kappa(v)\,,\,\mathbb{Z}/n(r-2))\,
\]
(with $A_{(x)}$ and $K_{(x)}$  defined as in Notation\;\ref{notationCohHasse2p4v3} (5)) is exact for every closed point $x$ of $\mathcal{X}$.

So, if Gersten's conjecture is true for the sheaf $\mathbb{Z}/n(r-1)$ over every 2-dimensional henselian excellent regular local ring, then Theorems$\;$\ref{thmCohHasse1p2v3} and \ref{thmCohHasse2p5v3} are still true without the equi-characteristic assumption.

(3) The Galois module $\mathbb{Z}/n(r-1)$ in Theorem$\;$\ref{thmCohHasse1p2v3} cannot be substituted by an arbitrary finite Galois module. In fact, in both the semi-global case and the local case, even with an algebraically closed residue field, there exists a finite Galois module $\mu$ such that the Hasse principle for $H^2(K,\,\mu)$ fails (\cite[Corollaires$\;$5.3 and 5.7]{CTPS16TAMS}).
\end{remark}

\section{Applications to torsors under semisimple groups}\label{sec3}

Unless otherwise stated, we will keep the notation and hypotheses of Theorem$\;$\ref{thmCohHasse1p2v3} in this section.
In particular, $A$ denotes an \emph{equi-characteristic}, henselian, excellent, normal local domain with residue field $k$. $\mathcal{X}$ is a two-dimensional regular integral scheme equipped with a proper morphism $\pi: \mathcal{X}\to\Spec(A)$ whose closed fiber is an snc divisor. $K$ is the function field of $\mathcal{X}$. In the semi-global case, $A$ is a discrete valuation ring and $\pi$ is flat. In the local case, $A$ is two-dimensional and $\pi$ is birational.

We shall give a number of Hasse principles for torsors under semisimple algebraic groups and for related structures. These Hasse principles are easy consequences of our cohomological Hasse principle (Theorem$\;$\ref{thmCohHasse1p2v3}) thanks to injectivity properties of certain cohomological invariants. Since this strategy is certainly well-known to experts and has already been used by several authors in similar contexts (see e.g. \cite{CTPaSu}, \cite{HHK12},  \cite{Preeti13}, \cite{Hu12}), here we will  only explain very briefly the results.

In the rest of this section, we denote by $G$ a semisimple simply connected algebraic group over $K$ and we consider the pointed set
\[
\sha(\mathcal{X}\,,\,G):=\ker\left(H^1(K,\,G)\lra \prod_{v\in \mathcal{X}^{(1)}}H^1(K_v,\,G)\right)\,.
\]When $G$ is absolutely simple, we denote by
  \[
  R_G\,:\; H^1(K,\,G)\lra H^3(K,\,\mathbb{Q}/\mathbb{Z}(2))
  \]the Rost invariant of $G$ over $K$. The order of $R_G$ can be determined explicitly according to the type of $G$ (cf. \cite[pp.437--442]{KMRT}). \emph{Here we will always assume that the order of $R_G$ is invertible in $k\,($or equivalently, in $K)$}. (In Theorems\;\ref{thmCohHasse3p2v3}, \ref{thmCohHasse3p4v3} and \ref{thmCohHasse3p5v3}, this condition boils down to the assumption $\mathrm{char}(K)\neq 2$.)

\vskip2mm

The following list of Hasse principles follows as were discussed in \cite[$\S$4.3]{HHK12} in the semi-global case. These results do not require any assumption on the residue field $k$ (except for the restriction on the characteristic with respect to the Rost invariant). We refer the reader to \cite{Preeti13} and \cite{Hu12} for more details when  $k$ is assumed finite.

\begin{itemize}
  \item (Groups of type ${}^1A_n^*$.) Let $D$ be a central simple $K$-algebra of square-free index and let $G=\mathbf{SL}_1(D)$.  The Rost invariant
     of the group $G$ is injective by \cite[Thm.$\;$24.4]{Suslin85}. (Our assumption on the order of the Rost invariant implies that the index of $D$ is invertible in $k$.) So we have $\sha(\mathcal{X},\,G)=1$ by Theorem$\;\ref{thmCohHasse1p2v3}$.

      \item (Groups of type $G_2$.) Let $G$ be the automorphism group $\mathbf{Aut}(C)$ of some Cayley algebra $C$ over $K$. If $\xi\in H^1(K,\,G)$ corresponds to a Cayley algebra $C'$, the Rost invariant $R_G$  maps $\xi$ to $e_3(N_C)-e_3(N_{C'})$, where $N_C$ and $N_{C'}$ denote the norm forms of $C$ and $C'$ respectively and $e_3$ is the Arason invariant for quadratic forms. Two Cayley algebras are isomorphic if and only if their norm forms are isomorphic. Since the norm form of a Cayley algebra is a 3-fold Pfister form and the Arason invariant is injective on Pfister forms by a well-known theorem of Merkurjev (cf. \cite[Prop.$\;$2]{Ara84}), the Rost invariant $R_G$ has trivial kernel. Hence, $\sha(\mathcal{X},\,G)=1$.

          A similar result was discussed in \cite[Example$\;$9.4 (a)]{HHK15AJM}.

  \item (Quasi-split groups.) For a quasi-split group $G$, we have
$\sha(\mathcal{X},\,G)=1$ in the each of the following cases:

  (1) $G$ is exceptional not of type $E_8$;

  (2) $G$ is of type $B_n$ with $2\le n\le 6$;

  (3) $G$ is of type $D_n$ with $3\le n\le 6$ or split of type $D_7$;

  (4) $G$ is of type ${}^2A_n$ with $n\le 5$.

Indeed, the triviality of the Rost kernel in these case has been proved in \cite{Gari01} (see also \cite{Chern03} in the first case).

  \item (Groups of type $F_4^{red}$.)   Recall that to each Albert algebra $J'$ over $K$ one can associate three cohomological invariants (cf. \cite[$\S$9.4]{SerreCohGalSemBour95} or \cite[$\S$40]{KMRT})
  \[
  f_3(J')\in H^3(K,\,\mathbb{Z}/2),\; f_5(J')\in H^5(K,\,\mathbb{Z}/2)\,\;\text{and }\quad g_3(J')\in H^3(K,\,\mathbb{Z}/3) \]
    such that the following properties hold:

    (1) $J'$ is reduced if and only if $g_3(J')=0$.

    (2) Two reduced Albert algebras are isomorphic if and only if they have the same  $f_3$ and $f_5$ invariants.

    Let $G=\mathbf{Aut}(J)$ be the automorphism group of a reduced Albert algebra $J$ over $K$.
    Our cohomological Hasse principle yields $\sha(\mathcal{X}\,,\,G)=1$ by means of the invariants $f_3,\,f_5$ and $g_3$.
\end{itemize}

We now discuss the Hasse principle for some other  groups.

\vskip2mm

For any field $F$ of characteristic different from 2, we denote by $W(F)$ its Witt group of quadratic forms and for each $r\ge 1$, we denote by $I^r(F)$ the subgroup generated by the classes of $r$-fold Pfister forms. By the quadratic form version of Milnor's conjecture (proved in \cite{OVV07}), there is a canonical homomorphism
\[
e_r\,:\; I^r(F)\lra H^r(F,\,\mathbb{Z}/2)
\]sending the class of a Pfister form
\[
\langle\!\langle a_1,\dotsc, a_r\rangle\! \rangle:=\langle 1,\,-a_1\rangle\otimes \cdots\otimes \langle 1,\,-a_r\rangle
\] to the cup product
\[
(a_1)\cup\cdots\cup (a_r)\;\in\; H^r(F,\,\mathbb{Z}/2)\,,
\]whose kernel is $I^{r+1}(F)$.

\begin{prop}\label{propCohHasse3p1v3}Let $\mathcal{X}$ and $K$ be as in Theorem$\;\ref{thmCohHasse1p2v3}$. Assume $\mathrm{char}(K)\neq 2$.

Then for each $r\ge 2$, the natural map
\[
I^r(K)\lra \prod_{v\in \mathcal{X}^{(1)}}I^r(K_v)
\]
is injective.
\end{prop}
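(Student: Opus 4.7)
The plan is to bootstrap from the cohomological Hasse principle (Theorem \ref{thmCohHasse1p2v3}) to the quadratic form setting via the Milnor conjecture (proved by Orlov--Vishik--Voevodsky \cite{OVV07}) and then close the argument with the Arason--Pfister Hauptsatz. Let $\phi\in I^r(K)$ be an element whose image in $I^r(K_v)$ vanishes for every $v\in\mathcal{X}^{(1)}$, i.e.\ the quadratic form (representative of) $\phi$ becomes hyperbolic over each $K_v$. The goal is $\phi=0$ in $W(K)$.

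The first and main step is to show by induction on $s\geq r$ that $\phi\in I^s(K)$. The base case $s=r$ is the hypothesis. For the inductive step, suppose $\phi\in I^s(K)$ and consider the cohomology class $e_s(\phi)\in H^s(K,\mathbb{Z}/2)$. For every $v\in\mathcal{X}^{(1)}$, the restriction of $\phi$ to $K_v$ is already zero in $W(K_v)$, so it vanishes in $I^s(K_v)$ and consequently $e_s(\phi)$ restricts to zero in $H^s(K_v,\mathbb{Z}/2)$. Since $\mathrm{char}(K)\neq 2$ and $A$ is equi-characteristic, $2$ is invertible in the residue field $k$, so the hypothesis of Theorem \ref{thmCohHasse1p2v3} is satisfied with $n=2$. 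Applying that theorem yields $e_s(\phi)=0$ in $H^s(K,\mathbb{Z}/2)$. By the Milnor conjecture, $\ker(e_s)=I^{s+1}(K)$, so $\phi\in I^{s+1}(K)$, completing the induction.

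The final step is to invoke the Arason--Pfister Hauptsatz: any anisotropic form in $I^s(F)$ has dimension at least $2^s$. Since $\phi$ has fixed finite dimension and lies in $I^s(K)$ for all $s\geq r$, choosing $s$ large enough that $2^s>\dim\phi$ forces the anisotropic part of $\phi$ to vanish, hence $\phi=0$ in $W(K)$ and \emph{a fortiori} in $I^r(K)$.

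There is no serious obstacle: the argument is a standard bootstrapping along the fundamental filtration, and the two inputs (Milnor conjecture for the identification $I^s/I^{s+1}\cong H^s(-,\mathbb{Z}/2)$, and Arason--Pfister for termination) are used in the textbook way. The only point requiring a brief verification is that Theorem \ref{thmCohHasse1p2v3} genuinely applies at every step, which comes down to the equi-characteristic hypothesis together with $\mathrm{char}(K)\neq 2$ ensuring that $n=2$ is invertible in the residue field of $A$.
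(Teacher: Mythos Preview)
Your proof is correct and follows essentially the same approach as the paper's: both use the Milnor conjecture together with Theorem~\ref{thmCohHasse1p2v3} to push an element of the kernel from $I^r$ into $I^{r+1}$, iterate, and then invoke $\bigcap_{s\ge r} I^s(K)=0$ (Arason--Pfister) to conclude. The paper phrases the induction in terms of the kernels $J_r=J_{r+1}=\cdots$ rather than a single element $\phi$, but the argument is the same.
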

\begin{proof}
Let $J_r$ be the kernel of the map in the proposition. The commutative diagram
\[\begin{CD}
  I^r(K) @>>> \prod I^r(K_v) \\
  @VVV @VVV\\
  H^r(K,\,\mathbb{Z}/2) @>>> \prod H^r(K_v,\,\mathbb{Z}/2)
\end{CD}\]
together with Theorem$\;$\ref{thmCohHasse1p2v3} shows that for every $r\ge 2$,
\[
J_r\subseteq I^{r+1}(K)=\ker\left(I^r(K)\lra H^r(K,\,\mathbb{Z}/2)\right)\,.
\]Therefore, we have for each $r\ge 2$,
\[
J_r=J_r\cap I^{r+1}(K)=J_{r+1}=J_{r+2}=\cdots =\bigcap_{d\ge r}J_d\,.
\]Since $\cap_{d\ge r}I^d(K)=0$ (cf. \cite[Coro.$\;$X.5.2]{Lam}), it follows immediately that $J_r=0$.
\end{proof}

In the semi-global case, the above local-global principle is also stated for $r=2$ in the last paragraph of \cite[$\S$9.2]{HHK15AJM}. (See also \cite[Lemma$\;$4.10]{Hu11} for the local case with finite residue field.)

\begin{thm}[Groups of type $C_n^*$]\label{thmCohHasse3p2v3}
Let $\mathcal{X}$ and $K$ be as in Theorem$\;\ref{thmCohHasse1p2v3}$. Assume $\mathrm{char}(K)\neq 2$.
Let $D$ be a quaternion division algebra over $K$ with standard
involution $\tau_0$ and $h$ a nonsingular hermitian form over
$(D,\,\tau_0)$. Let $G=\mathbf{U}(h)$ be the unitary group of
the hermitian form $h$.

Then the natural map
\[
H^1(K\,,\,G)\lra \prod_{v\in \mathcal{X}^{(1)}}H^1(K_v\,,\,G)
\]is injective. In particular,  one has $\sha(\mathcal{X},\,G)=1$.
\end{thm}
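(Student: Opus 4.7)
The plan is to reduce the assertion to Proposition~\ref{propCohHasse3p1v3} via Jacobson's classical correspondence between hermitian forms over $(D,\tau_0)$ and their trace quadratic forms. First recall that for $G = \mathbf{U}(h)$, the pointed set $H^1(K,G)$ is in natural bijection with the set of isometry classes of nonsingular hermitian forms on $(D,\tau_0)$-modules of the same $D$-rank $n$ as $h$, with $h$ itself corresponding to the basepoint. To prove the injectivity of the local--global map, it therefore suffices to show that any two such hermitian forms $h_1, h_2$ over $(D,\tau_0)$ of common rank $n$ that become isometric over each $K_v$ for $v \in \mathcal{X}^{(1)}$ are already isometric over $K$.

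Since $\tau_0$ is the standard (symplectic) involution on $D$ and $\mathrm{char}(K)\neq 2$, the space of $\tau_0$-symmetric elements of $D$ is exactly $K$, so each $h_i$ can be diagonalized as $h_i = \dgf{a_1^{(i)},\ldots,a_n^{(i)}}$ with $a_j^{(i)} \in K^\times$. Associated to such a diagonal hermitian form $h = \dgf{a_1,\ldots,a_n}$ is its $4n$-dimensional Jacobson trace form $q_h := \dgf{a_1,\ldots,a_n}\otimes n_D$, where $n_D$ denotes the reduced norm form of $D$, a 2-fold Pfister form (so $n_D \in I^2(K)$). Jacobson's classical theorem (cf.~\cite[Ch.~II]{KMRT}) asserts that $h\cong h'$ over $(D,\tau_0)$ if and only if $q_h\cong q_{h'}$ as $K$-quadratic forms. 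Computing in the Witt ring $W(K)$,
\[
q_{h_1} - q_{h_2} \;=\; \sum_{j=1}^{n} \dgf{a_j^{(1)}, -a_j^{(2)}}\otimes n_D,
\]
which expresses the difference as a sum of products of 2-dimensional quadratic forms (lying in $I(K)$) with $n_D\in I^2(K)$; hence the class $q_{h_1}-q_{h_2}$ lies in $I^3(K)$.

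By functoriality of the trace-form construction, the local isometries $h_1 \cong h_2$ over each $K_v$ yield $q_{h_1}\cong q_{h_2}$ over $K_v$, so $q_{h_1} - q_{h_2}$ vanishes in $W(K_v)$ for every $v \in \mathcal{X}^{(1)}$. Applying Proposition~\ref{propCohHasse3p1v3} with $r=3$ forces $q_{h_1} = q_{h_2}$ in $W(K)$; since both trace forms are nondegenerate of the same $K$-dimension $4n$, Witt cancellation gives $q_{h_1} \cong q_{h_2}$ as quadratic forms, and Jacobson's theorem then yields $h_1\cong h_2$. The additional assertion $\sha(\mathcal{X},G)=1$ is the special case $h_1 = h$. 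The only mildly subtle step is the trace-form factorization: it is precisely the appearance of the 2-fold Pfister form $n_D$ (together with the identity of $D$-ranks, which is automatic in the torsor classification) that places the difference in $I^3(K)$ and thereby within the reach of our cohomological Hasse principle; beyond this, the argument is a clean combination of Jacobson's theorem with Proposition~\ref{propCohHasse3p1v3}.
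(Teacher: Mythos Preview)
Your proof is correct and follows essentially the same approach as the paper: both reduce to Proposition~\ref{propCohHasse3p1v3} by invoking Jacobson's theorem that hermitian forms over $(D,\tau_0)$ are classified by their trace quadratic forms, and by observing that the Witt-difference of the two trace forms lies in $I^3(K)$. You simply spell out the reason for the $I^3$-membership more explicitly (via the diagonalization and the factor $n_D\in I^2(K)$), whereas the paper states it without detail.
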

\begin{proof}
The pointed set $H^1(K\,,\,G)=H^1(K\,,\,\mathbf{U}(h))$ classifies up to
isomorphism hermitian forms over $(D,\,\tau_0)$ of the same rank as
$h$ (cf. \cite[Prop.$\;$2.16]{PR94}). Let $h_1$ and $h_2$ be two such hermitian forms. Put $h'=h_1\bot (-h_2)$ and let $q_{h'}$ be the trace form of $h'$. Two hermitian forms over
$(D\,,\,\tau_0)$ are isomorphic if and only if their trace forms are
isomorphic as quadratic forms (cf. \cite[p.352, Thm.$\;$10.1.7]{Schar}). So we need only to show that $[q_{h'}]=0$ in the Witt group $W(K)$. Note that the class  $[q_{h'}]\in W(K)$ lies in the subgroup $I^3(K)$. The theorem thus follows from Proposition$\;$\ref{propCohHasse3p1v3}.
\end{proof}

A special case of the following theorem has been discussed in \cite[Thm.$\;$4.9]{Hu11}.

\begin{thm}\label{thmCohHasse3p3v3}
Let $\mathcal{X}$ and $K$ be as in Theorem$\;\ref{thmCohHasse1p2v3}$. Assume $\mathrm{char}(K)\neq 2$.

Then for any nonsingular quadratic form $\phi$ of rank $\ge 2$ over
$K$, the natural map
\[
H^1(K\,,\,\mathbf{SO}(\phi))\lra\prod_{v\in\mathcal{X}^{(1)}}H^1(K_v\,,\,\mathbf{SO}(\phi))
\]
is injective.
\end{thm}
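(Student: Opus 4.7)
The plan is to reduce the theorem to Proposition\;\ref{propCohHasse3p1v3} via the same Witt-theoretic template used in the preceding results of this section. Recall that $H^1(K,\mathbf{O}(\phi))$ classifies nonsingular quadratic forms over $K$ of the same rank as $\phi$ up to isometry, and that $\mathbf{SO}(\phi)\hookrightarrow\mathbf{O}(\phi)$ fits into the short exact sequence
\[
1\lra\mathbf{SO}(\phi)\lra\mathbf{O}(\phi)\xrightarrow{\det}\mu_2\lra 1,
\]
so that twisting by a cocycle with values in $\mathbf{SO}(\phi)$ produces a form of the same rank \emph{and} the same discriminant as $\phi$.

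Given $\alpha$ in the kernel of the local-global map, I would associate to $\alpha$ its twisted form $\phi_\alpha$. By hypothesis, $\phi_\alpha\cong\phi$ over every $K_v$, so the Witt class $q:=[\phi\perp(-\phi_\alpha)]\in W(K)$ is locally trivial at each $v\in\mathcal{X}^{(1)}$. Since $\phi$ and $\phi_\alpha$ share their rank and their discriminant, the class $q$ has even dimension and trivial signed discriminant, hence lies in $I^2(K)$.

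Applying Proposition\;\ref{propCohHasse3p1v3} with $r=2$ then forces $q=0$ in $I^2(K)\subseteq W(K)$, and Witt cancellation gives $\phi\cong\phi_\alpha$ over $K$. This means the image of $\alpha$ in $H^1(K,\mathbf{O}(\phi))$ is trivial.

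Finally, to upgrade triviality in $H^1(K,\mathbf{O}(\phi))$ to $\alpha=1$ in $H^1(K,\mathbf{SO}(\phi))$ itself, I would use the Galois cohomology exact sequence attached to the short exact sequence above. The preimage of the base point under $H^1(K,\mathbf{SO}(\phi))\to H^1(K,\mathbf{O}(\phi))$ coincides with the image of the connecting map $\mu_2(K)\to H^1(K,\mathbf{SO}(\phi))$, and this connecting map is trivial because $\det\colon\mathbf{O}(\phi)(K)\to\mu_2(K)$ is surjective: the rank $\ge 2$ hypothesis guarantees an anisotropic vector, and reflection along such a vector has determinant $-1$. The only point requiring genuine care is the bookkeeping that an $\mathbf{SO}(\phi)$-twist preserves the discriminant, which is exactly what places $q$ in $I^2(K)$ and makes Proposition\;\ref{propCohHasse3p1v3} applicable at the smallest relevant degree $r=2$; everything else is formal.
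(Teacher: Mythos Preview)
Your approach is the same as the paper's---reduce to Proposition~\ref{propCohHasse3p1v3} by observing that two forms in $H^1(K,\mathbf{SO}(\phi))$ share rank and discriminant, so their Witt difference lies in $I^2(K)$---but you have proved a weaker statement than what is claimed. The theorem asserts that the local-global map is \emph{injective}, and $H^1(K,\mathbf{SO}(\phi))$ is only a pointed set, so triviality of the kernel does not automatically give injectivity. You have taken $\alpha$ in the kernel and compared $\phi_\alpha$ with $\phi$; to prove injectivity you must start instead with two classes $\alpha,\beta$ having the same image at every $v$, and compare $\phi_\alpha$ with $\phi_\beta$. The paper does exactly this: it takes two forms $\psi,\psi'$ representing classes in $H^1(K,\mathbf{SO}(\phi))$, notes that $[\psi]-[\psi']\in I^2(K)$, and applies Proposition~\ref{propCohHasse3p1v3}. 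Your argument transplants without change once you make this adjustment.

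A related point: your final paragraph shows only that the fiber of $H^1(K,\mathbf{SO}(\phi))\to H^1(K,\mathbf{O}(\phi))$ over the \emph{base point} is trivial, which suffices for the kernel but not for injectivity at an arbitrary class. If you want to keep the detour through $\mathbf{O}(\phi)$, you would need to twist by $\beta$ first (replacing $\phi$ by $\phi_\beta$, which is again a rank~$\ge 2$ form) and then run your kernel argument. The paper sidesteps this by invoking directly the well-known description of $H^1(K,\mathbf{SO}(\phi))$ as the set of isometry classes of forms with the same dimension and discriminant as $\phi$, which makes the passage through $\mathbf{O}(\phi)$ and the reflection argument unnecessary.
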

\begin{proof}
Let $\psi\,,\,\psi'$ be nonsingular quadratic forms representing classes in
$H^1(K\,,\,\mathbf{SO}(\phi))$. As they have the same dimension, the
forms $\psi$ and $\psi'$ are isometric if and only if they represent
the same class in the Witt group. Since $\psi$ and $\psi'$ also have the
same discriminant, we have $[\psi]-[\psi']\in I^2(K)$ by \cite[p.82, Chapt.$\;$2,
Lemma$\;$12.10]{Schar}. Then the theorem is immediate from Proposition$\;$\ref{propCohHasse3p1v3}.
\end{proof}

In the (semi-global) case where $A$ is a complete discrete valuation ring, one can use \cite[Thm.$\;$3.1]{CTPaSu}  to prove a variant of Theorem\;\ref{thmCohHasse3p3v3}  without the equi-characteristic assumption. Consequently, all the numbered results in this section have mixed characteristic versions in that case. Details of the argument will be given in Remark$\;$\ref{remarkCohHasse3p7v3}.

\begin{thm}[Special cases of groups of type ${}^2A_n$]\label{thmCohHasse3p4v3}
Let $\mathcal{X}$ and $K$ be as in Theorem$\;\ref{thmCohHasse1p2v3}$. Assume $\mathrm{char}(K)\neq 2$. Let $h$ be a nonsingular hermitian form of rank $\ge 2$ over a quadratic field extension $L$ of $K$ and let $G=\mathbf{SU}(h)$ be the special unitary group.

Then the natural map
\[
H^1(K\,,\,G)\lra \prod_{v\in \mathcal{X}^{(1)}}H^1(K_v\,,\,G)
\]is injective. In particular,  one has $\sha(\mathcal{X},\,G)=1$.
\end{thm}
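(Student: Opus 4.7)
The plan is to adapt the trace-form strategy of Theorems \ref{thmCohHasse3p2v3} and \ref{thmCohHasse3p3v3} to this setting, exploiting the extra same-discriminant condition built into $\mathbf{SU}$ (as opposed to $\mathbf{U}$) in order to land in $I^3(K)$ rather than merely $I^2(K)$, and then applying Proposition \ref{propCohHasse3p1v3} with $r=3$. A standard twisting argument reduces the injectivity statement to triviality of the kernel, so I fix a class $\xi$ in the local-global kernel and aim for $\xi=1$. The exact sequence $1\to\mathbf{SU}(h)\to\mathbf{U}(h)\to R^{(1)}_{L/K}\mathbb{G}_m\to 1$ together with Hilbert~90 (giving $H^1(K,R^{(1)}_{L/K}\mathbb{G}_m)=K^*/N_{L/K}L^*$) shows that the image $\eta$ of $\xi$ in $H^1(K,\mathbf{U}(h))$ classifies a hermitian form $h_\xi$ over $(L,\tau)$ of the same rank as $h$ and with $\det(h_\xi)\equiv\det(h)\pmod{N_{L/K}L^*}$. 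Furthermore, the surjectivity of $\det\colon \mathbf{U}(h)(K)\to (L^*)^{N=1}$ (realized by $\mathrm{diag}(z,1,\dots,1)$ because $\mathrm{rank}(h)\ge 2$) forces the connecting map $(L^*)^{N=1}\to H^1(K,\mathbf{SU}(h))$ to be trivial, so $H^1(K,\mathbf{SU}(h))\hookrightarrow H^1(K,\mathbf{U}(h))$. It will therefore suffice to show $h_\xi\cong h$ over $K$, which by Jacobson's theorem amounts to the isometry of the quadratic trace forms over $K$.

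The key step is an $I^3$-computation. Writing $L=K(\sqrt d)$ and diagonalizing $h=\langle a_1,\dots,a_n\rangle$, $h_\xi=\langle b_1,\dots,b_n\rangle$ with $a_i,b_i\in K^*$, one has
\[
q:=\mathrm{Tr}_{L/K}(h_\xi)-\mathrm{Tr}_{L/K}(h)=\langle b_1,\dots,b_n,-a_1,\dots,-a_n\rangle\cdot\Pf{d}\in W(K),
\]
which is automatically in $I^2(K)$ from the product structure. To upgrade to $I^3(K)$, I would use Merkurjev's theorem $e_2\colon I^2/I^3\simeq H^2(K,\mathbb{Z}/2)$ together with the formula $e_2(\psi\cdot\Pf{d})=(d_\pm\psi)\cup(d)$ for $\psi\in I(K)$: a direct calculation of the signed discriminant of $\langle b_1,\dots,b_n,-a_1,\dots,-a_n\rangle$, with some care taken for the parity of $n$, gives $d_\pm\equiv c\pmod{K^{*2}}$ where $c:=\prod a_i\bigm/\prod b_i$; and the $\mathbf{SU}$-discriminant condition says exactly $c\in N_{L/K}L^*$, which in turn is equivalent to $(c)\cup(d)=0$ in $H^2(K,\mathbb{Z}/2)$. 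This $I^3$-step is the main technical point and is where the $\mathbf{SU}$ hypothesis (rather than just $\mathbf{U}$) is crucial.

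Given $q\in I^3(K)$, the hypothesis on $\xi$ ensures $h_\xi\cong h$ over every $K_v$ and thus $q=0$ in each $W(K_v)$. Proposition \ref{propCohHasse3p1v3} with $r=3$ then forces $q=0$ in $W(K)$; Jacobson's theorem over $K$ delivers $h_\xi\cong h$, hence $\eta=1$ and finally $\xi=1$ by the injectivity of $H^1(K,\mathbf{SU}(h))\hookrightarrow H^1(K,\mathbf{U}(h))$.
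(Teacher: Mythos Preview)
Your argument is correct. The paper's proof is considerably shorter and packages things differently: rather than passing through $\mathbf{U}(h)$ and computing explicitly that the Witt difference of trace forms lands in $I^3(K)$, the paper uses the natural embedding $\mathbf{SU}(h)\hookrightarrow\mathbf{SO}(q)$ (with $q$ the trace form of $h$), quotes \cite[Thm.~10.1.1~(ii)]{Schar} for the injectivity of the induced map $H^1(K,\mathbf{SU}(h))\to H^1(K,\mathbf{SO}(q))$, and then invokes Theorem~\ref{thmCohHasse3p3v3} directly. This sidesteps both the twisting reduction and the $I^3$ calculation.

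One remark on your route: the $I^3$ step, while correct, is stronger than necessary and your claim that this is ``where the $\mathbf{SU}$ hypothesis is crucial'' is slightly misplaced. The trace form of \emph{any} rank-$n$ hermitian form $\dgf{a_1,\dots,a_n}$ over $K(\sqrt{d})$ has determinant $\prod(-a_i^2 d)\equiv(-d)^n\pmod{K^{*2}}$, independent of the $a_i$; hence $q\in I^2(K)$ holds automatically, and Proposition~\ref{propCohHasse3p1v3} with $r=2$ already suffices. In particular your argument, with the $I^3$ paragraph deleted, proves the Hasse principle for $\mathbf{U}(h)$ over $(L,\tau)$ as well, and the $\mathbf{SU}$-discriminant condition plays no essential role beyond the (automatic) injection $H^1(K,\mathbf{SU}(h))\hookrightarrow H^1(K,\mathbf{U}(h))$. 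Your explicit computation with the norm condition and the cup product is a pleasant unpacking of the underlying mechanism, but the paper's route through $\mathbf{SO}(q)$ and Theorem~\ref{thmCohHasse3p3v3} encodes the same information more economically.
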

\begin{proof}
The set $H^1(K,\,G)$ classifies nonsingular hermitian forms $h'$ which have the same rank and discriminant as $h$ (\cite[p.403, (29.19)]{KMRT}).
Let $q$ be the trace form of $h$. There is a natural embedding of $G$ into $\mathbf{SO}(q)$. The induced map
\[
H^1(K,\,G)=H^1(K,\,\mathbf{SU}(h))\lra H^1(K,\,\mathbf{SO}(q))
\]sending the class of a hermitian form $h'$ to the class of its trace form, is injective by \cite[Thm.$\;$10.1.1 (ii)]{Schar}. The result thus follows from Theorem$\;$\ref{thmCohHasse3p3v3}.
\end{proof}

\begin{thm}[Special cases of groups of type $B_n$ or $D_n$]\label{thmCohHasse3p5v3}
Let $\mathcal{X}$ and $K$ be as in Theorem$\;\ref{thmCohHasse1p2v3}$. Assume $\mathrm{char}(K)\neq 2$. Let $q$ be a nonsingular quadratic form of dimension $\ge 3$ over $K$ and let $G=\mathbf{Spin}(q)$.
If $q$ is isotropic over $K$, then $\sha(\mathcal{X},\,G)=1$.
\end{thm}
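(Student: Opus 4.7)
The plan is to deduce the theorem from Theorem~\ref{thmCohHasse3p3v3} (applied to $\mathbf{SO}(q)$) by means of the central isogeny
\[
1\lra \mu_2 \lra \mathbf{Spin}(q)\lra \mathbf{SO}(q)\lra 1\,,
\]
combined with the standard fact that the spinor norm is surjective on the $K$-points of $\mathbf{SO}(q)$ as soon as $q$ is isotropic of dimension $\ge 2$.

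First I would extract from this extension the associated exact sequence of pointed sets
\[
\mathbf{SO}(q)(K)\xrightarrow{\mathrm{sn}} H^1(K,\mu_2)=K^*/K^{*2}\xrightarrow{f} H^1(K,\mathbf{Spin}(q))\xrightarrow{g} H^1(K,\mathbf{SO}(q))\,,
\]
in which the connecting map is the spinor norm $\mathrm{sn}$. Given any class $\xi\in\sha(\mathcal{X},G)$, its image $g(\xi)$ in $H^1(K,\mathbf{SO}(q))$ lies in $\sha(\mathcal{X},\mathbf{SO}(q))$, which is trivial by Theorem~\ref{thmCohHasse3p3v3} (applicable since $\mathrm{char}(K)\neq 2$ and $\dim q\ge 3\ge 2$). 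Thus $g(\xi)=1$, and by exactness of pointed sets $\xi=f(a)$ for some $a\in K^*/K^{*2}$.

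To finish, I would invoke the isotropy hypothesis: since $q$ is isotropic of dimension $\ge 3$, it decomposes as $q\cong\mathbb{H}\perp q_0$ for a hyperbolic plane $\mathbb{H}$, and the rotations of $\mathbb{H}$ (extended by the identity on $q_0$) already have spinor norms exhausting all of $K^*/K^{*2}$. Hence $\mathrm{sn}$ is surjective, the map $f$ is the trivial map of pointed sets, and $\xi=f(a)$ is the distinguished element of $H^1(K,\mathbf{Spin}(q))$.

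The argument is essentially formal once the two ingredients are in hand; there is no serious obstacle. The only point requiring some care is the surjectivity of the spinor norm for an isotropic form, but this is a classical computation that can be found in \cite{Schar}. Note also that the argument uses only the triviality of $\sha(\mathcal{X},\mathbf{SO}(q))$ and the surjectivity of $\mathrm{sn}$, so the same strategy should go through in the mixed-characteristic setting once the analog of Theorem~\ref{thmCohHasse3p3v3} is available (compare Remark~\ref{remarkCohHasse3p7v3}).
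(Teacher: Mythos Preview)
Your proof is correct and follows exactly the same strategy as the paper's: use the exact sequence coming from the central isogeny $1\to\mu_2\to\mathbf{Spin}(q)\to\mathbf{SO}(q)\to 1$, invoke the surjectivity of the spinor norm for isotropic $q$ to kill the image of $K^*/K^{*2}$ in $H^1(K,\mathbf{Spin}(q))$, and then apply Theorem~\ref{thmCohHasse3p3v3} to conclude. You have simply spelled out the steps (and the hyperbolic-plane justification of spinor-norm surjectivity) in slightly more detail than the paper does.
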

\begin{proof}
Consider the exact sequence of algebraic groups
\[
1\lra \mu_2\lra \mathbf{Spin}(q)\lra \mathbf{SO}(q)\lra 1
\]which gives rises to an exact sequence of pointed sets
\[
\mathbf{SO}(q)(K)\overset{\delta}{\lra}K^*/K^{*2}\lra H^1(K,\,\mathbf{Spin}(q))\lra H^1(K,\,\mathbf{SO}(q))\,.
\]Since $q$ is isotropic, the spinor norm map $\delta$ is surjective. It suffices to apply Theorem$\;$\ref{thmCohHasse3p3v3}, the Hasse principle for the group $\mathbf{SO}(q)$.
\end{proof}

Our Theorem$\;$\ref{thmCohHasse1p3v3} asserts that for a quasi-split semisimple simply connected group $G/K$ without $E_8$ factor, the Hasse principle for $G$-torsors holds.

\begin{proof}[Proof of Theorem$\;\ref{thmCohHasse1p3v3}$]By a standard argument using Shapiro's lemma (see e.g. \cite[$\S$3]{Hu12}), we may assume $G$ is absolutely simple. Choosing a proper morphism $\mathcal{X}\to \Spec(A)$ as in the introduction, we need only to show $\sha(\mathcal{X},\,G)=1$.

The cases ${}^1A_n$ and $C_n$ are trivial, since in these cases $H^1(F,\,G)=1$ for a quasi-split group $G$. The exceptional groups (not of type $E_8$) have been discussed earlier in this section. The cases of classical groups of type ${}^2A_n$, $B_n$ or $D_n$ are covered by Theorems$\;$\ref{thmCohHasse3p4v3} and \ref{thmCohHasse3p5v3}. This completes the proof.
\end{proof}

\begin{remark}\label{remarkCohHasse3p6v3}
There do exist quasi-split groups for which the Hasse principle holds but the Rost kernel is nontrivial. For example, the split group of type $B_7$ over $\mathbb{R}(\!(x,\,y)\!)$ has a nontrivial Rost kernel as shown in \cite[Example$\;$1.6]{Gari01}. But the Hasse principle for torsors under this group holds according to Theorem$\;$\ref{thmCohHasse1p3v3}.
\end{remark}

\begin{remark}\label{remarkCohHasse3p7v3}
If we consider discrete valuations arising from varying models $\mathcal{X}$, some results in this section extend to the mixed characteristic case when $A$ is a complete discrete valuation ring.

More precisely, let $F$ denote the fraction field of the complete discrete valuation ring $A$, and let $K$ be the function field of an algebraic curve over $F$. By the desingularization theory for two-dimensional schemes (\cite{Sha66}, \cite{Lip75}), there exists a proper morphism $\mathcal{X}\to\Spec(A)$ as in the semi-global case of Notation\;\ref{notationCohHasse1p1v3} such that $K$ is the function field of $\mathcal{X}$. By a \emph{divisorial valuation} of $K$ we mean a discrete valuation of $K$ that is defined by a codimension 1 point of such a scheme $\mathcal{X}$.

\emph{Assume that the residue field $k$ of $A$ has characteristic different form} 2. Then we claim that for the field $K$, the local-global statements in Proposition$\;$\ref{propCohHasse3p1v3} and Theorems\;\ref{thmCohHasse3p2v3} to \ref{thmCohHasse3p5v3} remain true if the set $\mathcal{X}^{(1)}$ is replaced with the set of divisorial valuations of $K$.

First of all, the variant of Theorem\;\ref{thmCohHasse3p3v3} is still true thanks to \cite[Thm.$\;$3.1]{CTPaSu}. This was explained in \cite[Remark$\;$4.12]{Hu11}. On the other hand, one can deduce Proposition\;\ref{propCohHasse3p1v3} from Theorem\;\ref{thmCohHasse3p3v3}.
Indeed, it is sufficient to show the injectivity of the local-global map for $I^2(K)$. Given a nonsingular quadratic form $\psi$ over $K$ whose class lies in $I^2(K)$, the dimension of $\psi$ is even and the discriminant of $\psi$ is trivial. If $\psi$ is locally hyperbolic, Theorem$\;$\ref{thmCohHasse3p3v3} applied to the hyperbolic form $\phi$ with $\dim\phi=\dim\psi$ implies that $\psi$ is hyperbolic over $K$.

The other three theorems follow from Proposition\;\ref{propCohHasse3p1v3} and Theorem\;\ref{thmCohHasse3p3v3} in the same way as before.

If $G$ is an absolutely simple simply connected group of type $G_2$ over $K$,  then the Hasse principle for $G$-torsors (with respect to the divisorial valuations) follows from the Hasse principle for $I^3(K)$, because Cayley algebras are classified by their norm forms.

Finally, if $G=\mathbf{SL}_1(D)$ for some quaternion algebra $D$ over $K$, the Hasse principle for $G$-torsors is also an easy consequence of  \cite[Thm.$\;$3.1]{CTPaSu}. This is because saying an element $a\in K^*$ is a reduced norm for $D$ is equivalent to saying that the quadratic form $\langle -a \rangle\bot N_D$ is isotropic, where $N_D$ denotes the norm form of $D$.

From the above, we get the following variant of Theorem\;\ref{thmCohHasse1p3v3}:

For the field $K$ as above, let $G$ be an absolutely simple simply connected group over $K$ such that the order of $R_G$ is invertible in the residue field $k$. Then the Hasse principle for $G$-torsors with respect to the divisorial valuations of $K$ holds if $G$ is of type $G_2$ or quasi-split of classical type.
\end{remark}

\section{Applications to Pythagoras numbers}\label{sec4}
In this section we prove Theorem$\;\ref{thmCohHasse1p4v3}$. The basic strategy is essentially the same as in the proof of \cite[Thm.$\;$5.1]{Hu13}.

\

\newpara\label{paraCohHasse4p1v3} Recall some general facts on the Pythagoras number of fields.

Let $F$ be a field of characteristic $\neq 2$. Recall that the level $s(F)$ of $F$ is the smallest integer $s\ge 1$ or $+\infty$, such that $-1$ is the sum of $s$ squares in $F$. If $s(F)=+\infty$, the field $F$ is called (formally) real. Otherwise it is called nonreal. For any field $F$ one has $s(F(t))=s(F)=s(F(\!(t)\!))$.

If $F$ is a nonreal field, $s(F)$ is a power of 2 and one has
\begin{equation}\label{eqCohHasse4p1p1v3}
s(F)\le p(F)\le p(F(x))=s(F)+1=s(F(x))+1=p(F(x,\,y))\,.
\end{equation}

The following theorem will be  referred to as \emph{Pfister's theorem}.
\begin{thm}[Pfister]\label{thmCohHasse4p2v3}
Let $F$ be a real field and $r\ge 1$ an integer. Then the following two conditions are equivalent:

$(\mathrm{i})$ $p(F(t))\le 2^r$.

$(\mathrm{ii})$ $s(L)\le 2^{r-1}$ for every finite nonreal extension $L/F$.
\end{thm}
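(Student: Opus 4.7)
The plan is to reformulate both conditions in terms of the $r$-fold Pfister form $\phi_r:=\langle\!\langle -1,\dots,-1\rangle\!\rangle \cong 2^r\langle 1\rangle$ and then pass between $F(t)$ and its residue fields via Milnor's exact sequence for the Witt ring of a rational function field. Since Pfister forms are round, an element $a$ in a field $E$ of characteristic $\neq 2$ is a sum of $2^r$ squares iff $\phi_{r,E}$ represents $a$, iff $\phi_{r,E}\otimes\langle 1,-a\rangle=0$ in $W(E)$. Moreover, using Pfister's theorem that the level of a nonreal field is a power of $2$, one checks that $\phi_{r,L}$ is hyperbolic (equivalently isotropic) over a field $L$ iff $s(L)\le 2^{r-1}$. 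Thus (i) amounts to $\phi_r\otimes\langle 1,-a\rangle=0$ in $W(F(t))$ for every sum of squares $a\in F(t)^\times$, and (ii) amounts to $\phi_{r,L}=0$ in $W(L)$ for every finite nonreal extension $L/F$.

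For (i) $\Rightarrow$ (ii), I would argue by contrapositive. Suppose $L=F[t]/(P)$ is a finite nonreal extension with $s(L)>2^{r-1}$, so that $\phi_{r,L}$ is anisotropic. The idea is to produce a sum of squares $a\in F(t)^\times$ with $v_P(a)$ odd; this is possible because $-1$ is a sum of squares in $L$, so we can multiply $P$ by a suitable sum of squares in $F(t)$ to make it a sum of squares while keeping odd $P$-adic valuation. The second residue map $\partial_P$ sends $\phi_r\otimes\langle 1,-a\rangle$ to a form similar to $\phi_{r,L}$, which is nonzero in $W(L)$. Hence $\phi_r\otimes\langle 1,-a\rangle\neq 0$ in $W(F(t))$, so $a$ cannot be a sum of $2^r$ squares in $F(t)$, contradicting (i).

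For the harder direction (ii) $\Rightarrow$ (i), the plan is to invoke Milnor's exact sequence
$$0\lra W(F)\lra W(F(t))\overset{\oplus_P\partial_P}{\lra}\bigoplus_{P}W(F[t]/(P))\lra 0\,,$$
where $P$ runs over monic irreducible polynomials. For a sum of squares $a\in F(t)^\times$, set $\alpha:=\phi_r\otimes\langle 1,-a\rangle$. The residue $\partial_P(\alpha)$ vanishes automatically when $v_P(a)$ is even; when $v_P(a)$ is odd, the residue field $F[t]/(P)$ must be nonreal (since in a real residue, the valuation of any sum of squares is necessarily even), so hypothesis (ii) yields $\phi_{r,F[t]/(P)}=0$ and hence $\partial_P(\alpha)=0$. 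A suitable splitting of Milnor's sequence (e.g.\ specialization at $t=\infty$ together with the behaviour at the place at infinity) then reduces the vanishing of $\alpha$ to a statement over $F$, which one handles by applying (ii) to quadratic extensions $F(\sqrt{-b})/F$ associated to sums of squares $b\in F^\times$, together with the roundness of $\phi_r$.

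The main obstacle lies in the last step of (ii) $\Rightarrow$ (i): controlling the $W(F)$-component and descending the hypothesis on finite nonreal extensions to a usable statement over $F$ itself. This requires a careful interplay between the transfer and restriction maps $W(F(\sqrt{-b}))\rightleftarrows W(F)$ and the fact that $\langle 1,-a\rangle$ becomes a multiple of $\langle 1,1\rangle$ after adjoining $\sqrt{-a}$, allowing one to propagate the hyperbolicity of $\phi_r\otimes\langle 1,-a\rangle$ from the quadratic extension back to $F$.
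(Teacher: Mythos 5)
The paper does not prove this classical theorem of Pfister; it simply cites Lam's textbook, so there is no in-paper argument to compare against. Your sketch follows what is essentially the standard argument (and presumably what is in the cited reference): reformulate both conditions as hyperbolicity of the $(r+1)$-fold Pfister form $\phi_r\otimes\langle 1,-a\rangle$, where $\phi_r=2^r\langle 1\rangle$, using that the level of a nonreal field is a $2$-power so that $s(L)\le 2^{r-1}$ is equivalent to isotropy of $\phi_{r,L}$; then work through Milnor's exact sequence for $W(F(t))$ and the second residue maps. The sketch is correct in its main lines.

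Two points you leave vague can be settled more cleanly than your wording suggests. In (i)$\Rightarrow$(ii), producing a sum of squares of odd $v_P$-value does not need ``multiplying $P$ by a sum of squares'' (which would require justification): lift a level relation $-1=\sum\bar{g}_i^{\,2}$ in $L=F[t]/(P)$ by polynomials $g_i$ of degree $<\deg P$; then $a=1+\sum g_i^2$ is a nonzero sum of squares divisible by $P$ and of degree $<2\deg P$, forcing $v_P(a)=1$. In (ii)$\Rightarrow$(i), once all second residues of $\alpha=\phi_r\otimes\langle 1,-a\rangle$ vanish and $\alpha$ lies in the image of $W(F)$, identify its preimage by specializing at a rational point $t=c$ where $a$ is a unit with $a(c)\ne 0$ (possible since $F$, being real, is infinite): the preimage is $\phi_r\otimes\langle 1,-a(c)\rangle$ with $a(c)$ a nonzero sum of squares in $F$; applying (ii) to the finite nonreal extension $F(\sqrt{-a(c)})/F$ and the subform theorem (an anisotropic Pfister form that becomes hyperbolic over $F(\sqrt d)$ is divisible by $\langle 1,-d\rangle$) shows $\langle 1,a(c)\rangle\mid\phi_r$, hence $\phi_r\otimes\langle 1,-a(c)\rangle$ is hyperbolic over $F$, as required.
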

\begin{proof}See e.g. \cite[p.397, Examples$\;$XI.5.9 (3)]{Lam}.
\end{proof}

\begin{remark}\label{remarkCohHasse4p3v3}
Becher and Van Geel have shown in \cite[Thm.$\;$3.5]{BvG09} that the two conditions in Pfister's theorem are also equivalent to the following:

(iii) $p(L)<2^r$ for any finite real extension $L/F$.
\end{remark}

Given a field $L$ of characteristic $\neq 2$ and an integer $r\ge 1$, the following assertions are equivalent:

(1) $s(L)\le 2^{r-1}$.

(2) The $r$-fold Pfister form $\langle\!\langle -1,\dotsc, -1\rangle\!\rangle=\langle 1,\,1\rangle^{\otimes r}$ is isotropic over $L$.

(3) The $r$-fold Pfister form $\langle\!\langle -1,\dotsc, -1\rangle\!\rangle=\langle 1,\,1\rangle^{\otimes r}$ is hyperbolic over $L$.

(4) The cohomology class $(-1)\cup\cdots\cup (-1)\in H^r(L,\,\mathbb{Z}/2)$ vanishes.

Here the equivalences of (1), (2) and (3) are classical and that they are equivalent to (4) is guaranteed by the quadratic form version of Milnor's conjecture  (proved in \cite{OVV07}).

\begin{remark}\label{remarkCohHasse4p4v3}
What we really need in this paper is only the equivalence between properties (3) and (4) above, but not the full strength of Milnor's conjecture. While a $K$-theoretic analog (cf. \cite[Thm.$\;$X.6.7]{Lam}) was known many years ago, this fact seems to have been proved only for $r\le 4$ (cf. \cite{JacobRost89} and \cite{MS90}) prior to the solution of Milnor's conjecture.
\end{remark}

Now consider a Laurent series field $F_n=k(\!(t_1,\dotsc, t_n)\!)$ with $n\ge 3$. A theorem of Pfister (cf. \cite[p.90, Thm.$\;$3.3]{Pfister95}) implies that for any integer $r\ge 1$, if $F$ is a field such that every $(r+1)$-fold Pfister form over $F(\sqrt{-1})$ is hyperbolic, then $p(F)\le 2^r$. Combined with Milnor's conjecture, this shows that if the  cohomological $2$-dimension $\mathrm{cd}_2(F(\sqrt{-1}))$ of the field $F(\sqrt{-1})$ is at most $r$, then $p(F)\le 2^r$.

If $k$ is a field of characteristic 0 such that $\mathrm{cd}_2(k(\sqrt{-1}))=r$, we have
$\mathrm{cd}_2(F_n(\sqrt{-1}))\le n+r$ by \cite[XIX, Coro.$\;$6.3]{SGA4iii}. Hence
$p(k(\!(t_1,\dotsc, t_n)\!))\le 2^{n+r}$ for such a field $k$. We call this estimate \emph{Pfister's bound}.

Note that Pfister's bound is not optimal already in the case $k=\mathbb{R}$.  Our goal here is to give a better estimate in the case $n=3$.

\

We are now ready to prove Theorem$\;$\ref{thmCohHasse1p4v3}, which states that if $k$ is a field such that $p(k(x,\,y))\le 2^r$, then
\[
p(k(\!(x,\,y,\,z)\!))\le p(k(\!(x,\,y)\!)(t))\le p(k(\!(x,\,y)\!)(\!(z,\,t)\!))\le 2^r\,.
\]
Here equalities hold when $p(k(x,\,y))=2^r$, since $p(k(x,\,y))\le p(k(\!(x,\,y,\,z)\!))$ by \cite[Coro.$\;$2.3]{Hu13}.

\begin{proof}[Proof of Theorem$\;\ref{thmCohHasse1p4v3}$]
If $k$ is nonreal, then by \eqref{eqCohHasse4p1p1v3} and \cite[Prop.$\;$3.2]{Hu13} we have
\[
p(k(\!(x,\,y,\,z)\!))=p(k(\!(x,\,y)\!)(t))=s(k)+1=s(k(x))+1=p(k(x,\,y))\,
\]
and
\[
p(k(\!(x,\,y)\!)(\!(z,\,t)\!))=s(k(\!(x,\,y)\!))+1=s(k)+1=p(k(x,\,y))\,.
\]
So we may assume $k$ is real. It has been proved in \cite[Corollaries$\;$2.3 and 4.4]{Hu13} that the inequalities
\[
p(k(x,\,y))\le p(k(\!(x,\,y,\,z)\!))\le p(k(\!(x,\,y)\!)(t))\,
\]hold in general. Moreover,
\[
p(k(\!(x,\,y)\!)(t))\le p(k(\!(x,\,y)\!)(\!(z,\,t)\!))\,,
\]and the two Pythagoras numbers in this inequality are bounded by the same 2-powers whenever such bounds exist (cf. \cite[Thm.$\;$5.18]{CDLR} and \cite[Thm.$\;$3.1]{Hu13}).

It is thus sufficient to show the inequality $p(k(\!(x,\,y)\!)(t))\le 2^r$ under the assumption $p(k(x,\,y))\le 2^r$. By Pfister's theorem, we need only to show $s(K)\le 2^{r-1}$ for every finite nonreal extension $K$ of the field $k(\!(x,\,y)\!)$. Since this property has an interpretation in terms of the vanishing of a cohomology class in $H^r(K,\,\mathbb{Z}/2)$ and we have a Hasse principle for the cohomology group $H^r(K,\,\mathbb{Z}/2)$ (Theorem$\;\ref{thmCohHasse1p2v3}$), it suffices to show that if $A$ is the integral closure of $k[\![x,\,y]\!]$ in $K$ and if $\mathcal{X}\to\Spec(A)$ is chosen as in Notation\;\ref{notationCohHasse1p1v3}, then $s(K_v)\le 2^{r-1}$ for every $v\in \mathcal{X}^{(1)}$.

For each $v\in\mathcal{X}^{(1)}$, the residue field $\kappa(v)$ has the same level as $K_v$, and $\kappa(v)$ is either  isomorphic to $k'(\!(t)\!)$ for some finite nonreal extension $k'/k$, or a function field of transcendence degree 1 over $k$. In the former case, Pfister's theorem applied to the real field $k$ shows that $s(\kappa(v))=s(k')\le 2^{r-1}$, since $p(k(t))\le p(k(x,\,y))\le 2^r$. In the latter case, $\kappa(v)$ is (isomorphic to) a finite nonreal extension of $k(x)$. Again by Pfister's theorem, applied to the real field $k(x)$ this time, we get $s(\kappa(v))\le 2^{r-1}$. The theorem is thus proved.
\end{proof}

Theorem$\;$\ref{thmCohHasse1p4v3} allows us to generalize the results in \cite[$\S$5]{Hu13}.

\begin{coro}\label{corCohHasse4p5v3}
Let $k$ be an algebraic function field of transcendence degree $d\ge 0$ over a field $k_0$.

$(\mathrm{i})$ If $k_0$ is a real closed field, then \[
p(k(\!(x,\,y,\,z)\!))\le p(k(\!(x,\,y)\!)(t))\le p(k(\!(x,\,y)\!)(\!(z,\,t)\!))\le 2^{d+2}\,.
\]

$(\mathrm{ii})$ If $k_0$ is a number field, i.e., a finite extension of $\mathbb{Q}$, then
\[p(k(\!(x,\,y,\,z)\!))\le p(k(\!(x,\,y)\!)(t))\le p(k(\!(x,\,y)\!)(\!(z,\,t)\!))\le 2^{d+3}\,.\]
\end{coro}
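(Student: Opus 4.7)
The plan is to reduce both statements to Theorem \ref{thmCohHasse1p4v3}: that theorem bounds every term in the chain by $2^r$ as soon as $p(k(x,y))\le 2^r$, so the task is simply to produce such a bound on $p(k(x,y))$. I would do this using Pfister's estimate recalled in \ref{paraCohHasse4p1v3}, namely $p(F)\le 2^r$ whenever $\mathrm{cd}_2(F(\sqrt{-1}))\le r$, combined with the standard cohomological dimension bound \cite[XIX, Coro.$\;$6.3]{SGA4iii} for finitely generated field extensions.

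For (i), since $k_0$ is real closed, $k_0(\sqrt{-1})$ is algebraically closed and $\mathrm{cd}_2(k_0(\sqrt{-1}))=0$. As $k(x,y)(\sqrt{-1})$ is finitely generated of transcendence degree $d+2$ over $k_0(\sqrt{-1})$, one gets $\mathrm{cd}_2(k(x,y)(\sqrt{-1}))\le d+2$, whence $p(k(x,y))\le 2^{d+2}$. Theorem \ref{thmCohHasse1p4v3} applied with $r=d+2$ yields the asserted chain of inequalities.

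For (ii), $k_0(\sqrt{-1})$ is a totally imaginary number field, so the standard fact $\mathrm{cd}_2(k_0(\sqrt{-1}))=2$ gives a Pfister-type bound on $p(k(x,y))$ via the same transcendence-degree argument. Plugging this into Theorem \ref{thmCohHasse1p4v3} delivers the required chain. To get the sharper exponent $d+3$ announced in the statement, rather than the $d+4$ coming from a crude application of Pfister's bound to $k(x,y)$, I would revisit the proof of Theorem \ref{thmCohHasse1p4v3}: the Hasse principle (Theorem \ref{thmCohHasse1p2v3}) together with Pfister's theorem reduces the question of $p(k(\!(x,y)\!)(t))\le 2^r$ to the level estimate $s(\kappa(v))\le 2^{r-1}$ at each codimension-one residue field $\kappa(v)$ of a regular model $\mathcal{X}\to\Spec(A)$ of the integral closure $A$ of $k[\![x,y]\!]$ in a given finite nonreal extension of $k(\!(x,y)\!)$. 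These residue fields $\kappa(v)$ are nonreal finitely generated extensions of $k_0$ of transcendence degree at most $d+1$ (either of the form $k'(\!(t)\!)$ for $k'/k$ finite nonreal in the horizontal case, or function fields of one variable over a finite extension of $k$ in the vertical case), and their levels are then controlled through $\mathrm{cd}_2(\kappa(v)(\sqrt{-1}))\le d+3$.

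The main technical step, I anticipate, is the careful comparison in case (ii) between the naive Pfister bound on $p(k(x,y))$ and the finer residue-field analysis on a regular model; the other ingredients (the Hasse principle, Pfister's theorem, and the cohomological dimension estimates) are all already available.
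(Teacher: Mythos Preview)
For part (i) your argument is correct and coincides with the paper's: bounding $p(k(x,y))\le 2^{d+2}$ via $\mathrm{cd}_2(k(x,y)(\sqrt{-1}))\le d+2$ is exactly the content of the Pfister example the paper cites, and then Theorem~\ref{thmCohHasse1p4v3} finishes.

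For part (ii) there is a genuine gap in the residue-field step. You need $s(\kappa(v))\le 2^{d+2}$ in order for Pfister's theorem to give $p(k(\!(x,y)\!)(t))\le 2^{d+3}$. For the residue fields that are function fields of one variable over a finite extension of $k$ (transcendence degree $d+1$ over $k_0$), the bound $\mathrm{cd}_2(\kappa(v)(\sqrt{-1}))\le d+3$ only yields $p(\kappa(v))\le 2^{d+3}$ by Pfister's bound, hence $s(\kappa(v))\le 2^{d+3}$, which is one exponent too weak. The implication ``$\mathrm{cd}_2(L(\sqrt{-1}))\le m \Rightarrow s(L)\le 2^{m-1}$'' that you implicitly invoke is false: already $L=\mathbb{Q}_2$ has $\mathrm{cd}_2(\mathbb{Q}_2(\sqrt{-1}))=2$ but $s(\mathbb{Q}_2)=4$. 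In fact, by Pfister's theorem applied to $k(x)$, the level bound $s(\kappa(v))\le 2^{d+2}$ for all such nonreal $\kappa(v)$ is \emph{equivalent} to $p(k(x,y))\le 2^{d+3}$, so your detour through the model circles back to precisely the input you tried to bypass.

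The paper does not attempt this shortcut. It invokes directly the result of Colliot-Th\'el\`ene and Jannsen \cite[Thm.~4.1]{CTJannsen91}, together with Jannsen's proof of Kato's conjecture \cite{Jannsen09} and Milnor's conjecture, to obtain $p(k(x,y))\le 2^{d+3}$, and then applies Theorem~\ref{thmCohHasse1p4v3}. This arithmetic input over number fields is the essential ingredient in (ii) and cannot be replaced by a cohomological-dimension count.
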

\begin{proof}
In case (i) we have $p(k(x,\,y))\le 2^{d+2}$ by \cite[p.95, Examples$\;$1.4 (4)]{Pfister95}.

In case (ii), thanks to Jannsen's work on Kato's conjecture \cite{Jannsen16} and the proof of Milnor's conjecture (\cite{OVV07}), we can deduce $p(k(x,\,y))\le 2^{d+3}$ from \cite[Thm.$\;$4.1]{CTJannsen91}.   So the corollary follows from Theorem$\;$\ref{thmCohHasse1p4v3}.
\end{proof}

In the above corollary, Pfister's bound  is $2^{d+3}$ in case (i) and $2^{d+5}$ in case (ii). The upper bound in (i) is likely to be optimal when $k$ is a rational function field.

\begin{lemma}\label{lemmaCohHasse4p6v3}
  Let $k_0$ be a field of characteristic $\neq 2$, $n\ge 1$ and $k=k_0(\!(t_1)\!)\cdots (\!(t_n)\!)$. Let $r\ge 2$ be an integer such that $p(k_0(x,\,y))\le 2^r$. Then $p(k(x,\,y))\le 2^r$.
\end{lemma}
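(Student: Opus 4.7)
The plan is to proceed by induction on $n$, the base case $n=0$ being trivial. For the inductive step, set $F = k_0(\!(t_1)\!)\cdots(\!(t_{n-1})\!)$ so that $k = F(\!(t)\!)$ with $t = t_n$, and assume by induction that $p(F(x,y)) \le 2^r$. We need to deduce $p(F(\!(t)\!)(x,y)) \le 2^r$. Applying Pfister's theorem (Theorem\;\ref{thmCohHasse4p2v3}) to the base field $F(\!(t)\!)(x)$ with the variable $y$, this reduces to proving $s(L) \le 2^{r-1}$ for every finite nonreal extension $L/F(\!(t)\!)(x)$, which in turn, by the equivalence of conditions (1)--(4) recalled just after Remark\;\ref{remarkCohHasse4p3v3}, amounts to showing the vanishing of the symbol $(-1)\cup\cdots\cup(-1) \in H^r(L,\mathbb{Z}/2)$.

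Since $L$ is the function field of a smooth projective curve over $F(\!(t)\!)$, one may fix a regular proper model $\pi:\mathcal{X}\to \Spec F[\![t]\!]$ with snc closed fibre by the desingularization theory for arithmetic surfaces. As $F[\![t]\!]$ is an equi-characteristic henselian excellent DVR, Theorem\;\ref{thmCohHasse1p2v3} applies in the semi-global case and yields the injectivity of
\[
H^r(L,\mathbb{Z}/2)\lra \prod_{v\in \mathcal{X}^{(1)}}H^r(L_v,\mathbb{Z}/2)\,.
\]
It thus suffices to show that $(-1)^r$ vanishes in each $H^r(L_v,\mathbb{Z}/2)$. Each $L_v$ is an equi-characteristic complete DVR field with residue characteristic $\ne 2$; since $L$ is nonreal, so is $L_v$, and Springer's theorem forces $\kappa(v)$ to be nonreal with $s(L_v)=s(\kappa(v))$. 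We are therefore reduced to bounding $s(\kappa(v))\le 2^{r-1}$ for every $v\in \mathcal{X}^{(1)}$.

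We then split on whether $v$ lies in the closed fibre $Y$ or the open complement $U$. If $v$ is the generic point of an irreducible component of $Y$, then $\kappa(v)$ is the function field of a curve over the residue field $F$ of $F[\![t]\!]$, hence a finite nonreal extension of $F(X)$ for some transcendental $X$; the inductive hypothesis $p(F(x,y))\le 2^r$ combined with Pfister's theorem immediately yields $s(\kappa(v))\le 2^{r-1}$. If instead $v$ corresponds to a closed point of the generic fibre $U$, then $\kappa(v)$ is a finite extension of $F(\!(t)\!)$; by Cohen's structure theorem in equi-characteristic, $\kappa(v)\cong F'(\!(u)\!)$ for some finite $F'/F$. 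Iterating Cohen's theorem inside $F=k_0(\!(t_1)\!)\cdots(\!(t_{n-1})\!)$, one writes $F'\cong k_0'(\!(u_1)\!)\cdots(\!(u_{n-1})\!)$ for a finite extension $k_0'/k_0$, and repeated application of Springer gives $s(\kappa(v))=s(F')=s(k_0')$. The nonreality of $\kappa(v)$ propagates down to $k_0'$, and Pfister's theorem applied to the hypothesis $p(k_0(x,y))\le 2^r$ with the finite nonreal extension $k_0'(x)/k_0(x)$ yields $s(k_0')=s(k_0'(x))\le 2^{r-1}$, which closes the case.

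The step requiring the most care is the reduction from $L$ to the completions $L_v$: the argument hinges on the fact that the Hasse principle of Theorem\;\ref{thmCohHasse1p2v3} applies to the DVR $F[\![t]\!]$, which needs its excellence (automatic in characteristic zero and otherwise handled by the standard approximation arguments available for iterated Laurent series over a field). Once that point is secured, the remaining ingredients---Pfister's theorem, the Milnor-conjecture reformulation of the level, Cohen's structure theorem together with Springer's theorem for equi-characteristic complete DVR fields, and the main cohomological Hasse principle---assemble directly into a proof.
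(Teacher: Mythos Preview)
Your proof is correct and follows essentially the same route as the paper: induction on $n$, the semi-global case of Theorem~\ref{thmCohHasse1p2v3} over the complete DVR $F[\![t]\!]$, and Pfister's theorem to control the levels of the residue fields $\kappa(v)$.

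A few minor remarks. First, the paper disposes of the nonreal case separately via \eqref{eqCohHasse4p1p1v3} before invoking Pfister's theorem, which as stated (Theorem~\ref{thmCohHasse4p2v3}) requires the base field to be real; your argument still goes through because the equivalence (i)$\Leftrightarrow$(ii) there holds trivially for nonreal base fields, but this deserves a word. Second, your caution about the excellence of $F[\![t]\!]$ is unneeded: complete Noetherian local rings are excellent in any characteristic, so no approximation argument is required. Third, in the generic-fibre case you unwind $\kappa(v)\cong F'(\!(u)\!)$ all the way down to $k_0'$ and appeal to the original hypothesis $p(k_0(x,y))\le 2^r$; this works, but is unnecessary once you have the inductive hypothesis $p(F(x,y))\le 2^r$: Pfister's theorem applied to $F(x)$ with the finite nonreal extension $F'(x)/F(x)$ already yields $s(F')=s(F'(x))\le 2^{r-1}$. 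The paper, having reduced to $n=1$ first, uses exactly this shortcut.
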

\begin{proof}
By induction we may reduce to the case $n=1$. If $k_0$ is nonreal,  the result follows from the fact that $s(k_0)=s(k_0(\!(t)\!))=s(k)$ in view of \eqref{eqCohHasse4p1p1v3}. So we may assume $k_0$ is real. By Pfister's theorem, it suffices to prove $s(K)\le 2^{r-1}$ for every finite nonreal extension $K$ of $k(x)=k_0(\!(t)\!)(x)$. The argument for a similar statement given in our proof of Theorem$\;$\ref{thmCohHasse1p4v3} works verbatim, since our cohomological Hasse principle is also valid in the semi-global case. Alternatively, one can use \cite[Thm.$\;$6.7]{BGvG12}.
\end{proof}

\begin{coro}\label{coroCohHasse4p7v3}
 Let $k_0$ be a field, $n\ge 1$ and $k=k_0(\!(t_1)\!)\cdots (\!(t_n)\!)$.

$(\mathrm{i})$ If $k_0$ is a function field of transcendence degree $d\ge 0$ over a real closed field, then \[
p(k(\!(x,\,y,\,z)\!))\le p(k(\!(x,\,y)\!)(t))\le p(k(\!(x,\,y)\!)(\!(z,\,t)\!))\le 2^{d+2}\,.
\]

$(\mathrm{ii})$ If $k_0$ is a function field of transcendence degree $d\ge 0$ over $\mathbb{Q}$, then
\[p(k(\!(x,\,y,\,z)\!))\le p(k(\!(x,\,y)\!)(t))\le p(k(\!(x,\,y)\!)(\!(z,\,t)\!))\le 2^{d+3}\,.\]
\end{coro}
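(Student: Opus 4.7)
The plan is to combine three previously established ingredients: the input Pythagoras bound on $p(k_0(x,y))$ that feeds into Corollary$\;$\ref{corCohHasse4p5v3}, the ascent Lemma$\;$\ref{lemmaCohHasse4p6v3}, and the main Theorem$\;$\ref{thmCohHasse1p4v3}. I set $r=d+2$ in case $(\mathrm{i})$ and $r=d+3$ in case $(\mathrm{ii})$; in both situations the goal is first to establish $p(k(x,y))\le 2^r$, after which Theorem$\;$\ref{thmCohHasse1p4v3} delivers the full chain of inequalities.

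For the first step, I would observe that the hypothesis on $k_0$ already yields $p(k_0(x,y))\le 2^r$ by exactly the arguments used at the start of the proof of Corollary$\;$\ref{corCohHasse4p5v3}: in case $(\mathrm{i})$ this is Pfister's classical bound \cite{Pfister95}, and in case $(\mathrm{ii})$ it follows by combining Jannsen's work on Kato's conjecture \cite{Jannsen09}, the proof of Milnor's conjecture \cite{OVV07} and \cite[Thm.$\;$4.1]{CTJannsen91}. Since $k_0$ has characteristic different from $2$ and $r\ge 2$, Lemma$\;$\ref{lemmaCohHasse4p6v3} then applies with base field $k_0$ and transports the bound through the iterated Laurent series tower to yield $p(k(x,y))\le 2^r$ for $k=k_0(\!(t_1)\!)\cdots(\!(t_n)\!)$.

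With this in hand, Theorem$\;$\ref{thmCohHasse1p4v3} applied to $k$ gives
\[
p(k(\!(x,\,y,\,z)\!))\le p(k(\!(x,\,y)\!)(t))\le p(k(\!(x,\,y)\!)(\!(z,\,t)\!))\le 2^r,
\]
which is the claimed inequality in both cases. Each of the three steps is a direct application of a ready-made result, so there is no substantive obstacle; the corollary is essentially the composition of Corollary$\;$\ref{corCohHasse4p5v3}, Lemma$\;$\ref{lemmaCohHasse4p6v3} and Theorem$\;$\ref{thmCohHasse1p4v3} arranged in the natural order, and the only thing worth flagging is that the $k_0$ of the present statement plays the role of the $k$ in Corollary$\;$\ref{corCohHasse4p5v3}.
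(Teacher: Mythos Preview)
Your proposal is correct and matches the paper's own proof essentially line for line: obtain $p(k_0(x,y))\le 2^r$ as in Corollary~\ref{corCohHasse4p5v3}, lift this to $p(k(x,y))\le 2^r$ via Lemma~\ref{lemmaCohHasse4p6v3}, and conclude with Theorem~\ref{thmCohHasse1p4v3}. Your remark that the present $k_0$ plays the role of the $k$ in Corollary~\ref{corCohHasse4p5v3} is accurate and is the only bookkeeping subtlety.
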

\begin{proof}
We get the desired upper bound for $p(k_0(x,\,y))$ as in the proof of Corollary\;\ref{corCohHasse4p5v3}.  By Lemma$\;$\ref{lemmaCohHasse4p6v3}, this yields the same bound for $p(k(x,\,y))$, so the result follows from Theorem$\;$\ref{thmCohHasse1p4v3}.
\end{proof}

As mentioned in the introduction, the results of the above corollary improve greatly on Pfister's bounds.

\

\noindent \emph{Acknowledgements.} The author is grateful to an anonymous referee for helpful comments.

\addcontentsline{toc}{section}{\textbf{References}}

\bibliographystyle{alpha}

\bibliography{Hasse}

\begin{thebibliography}{KMRT98}

\bibitem[AGV73]{SGA4iii}
M.~Artin, A.~Grothendieck, and J.-L. Verdier.
\newblock {\em Th\'{e}orie des Topos et Cohomologie \'{E}tale des Sch\'{e}mas
  (SGA 4), Tome 3}, volume \textbf{305} of {\em Lecture Notes in Math.}
\newblock Springer-Verlag, 1973.

\bibitem[Ara84]{Ara84}
J{\'o}n~Kr. Arason.
\newblock A proof of {M}erkurjev's theorem.
\newblock In {\em Quadratic and {H}ermitian forms ({H}amilton, {O}nt., 1983)},
  volume~4 of {\em CMS Conf. Proc.}, pages 121--130. Amer. Math. Soc.,
  Providence, RI, 1984.

\bibitem[BGVG14]{BGvG12}
Karim Becher, David Grimm, and Jan Van~Geel.
\newblock Sums of squares in algebraic function fields over a complete
  discretely valued field.
\newblock {\em Pacific J. Math.}, 267(2):257--276, 2014.

\bibitem[BKG04]{BKG04}
Mikhail Borovoi, Boris Kunyavski{\u\i}, and Philippe Gille.
\newblock Arithmetical birational invariants of linear algebraic groups over
  two-dimensional geometric fields.
\newblock {\em J. Algebra}, 276(1):292--339, 2004.

\bibitem[BVG09]{BvG09}
Karim~Johannes Becher and Jan Van~Geel.
\newblock Sums of squares in function fields of hyperelliptic curves.
\newblock {\em Math. Z.}, 261(4):829--844, 2009.

\bibitem[CDLR82]{CDLR}
M.~D. Choi, Z.~D. Dai, T.~Y. Lam, and B.~Reznick.
\newblock The {P}ythagoras number of some affine algebras and local algebras.
\newblock {\em J. Reine Angew. Math.}, 336:45--82, 1982.

\bibitem[Che03]{Chern03}
V.~Chernousov.
\newblock The kernel of the {R}ost invariant, {S}erre's conjecture {II} and the
  {H}asse principle for quasi-split groups {${}\sp {3,6}D\sb 4,E\sb 6,E\sb 7$}.
\newblock {\em Math. Ann.}, 326(2):297--330, 2003.

\bibitem[CT11]{CTLille11}
J.-L. Colliot-Th\'el\`ene.
\newblock Quelques probl\`emes locaux-globaux.
\newblock personal notes, available at
  \text{http://www.math.u-psud.fr/\~{}colliot/liste-cours-exposes.html}, 2011.

\bibitem[CTGP04]{CTGP}
J.-L. Colliot-Th{\'e}l{\`e}ne, P.~Gille, and R.~Parimala.
\newblock Arithmetic of linear algebraic groups over 2-dimensional geometric
  fields.
\newblock {\em Duke Math. J.}, 121(2):285--341, 2004.

\bibitem[CTJ91]{CTJannsen91}
Jean-Louis Colliot-Th{\'e}l{\`e}ne and Uwe Jannsen.
\newblock Sommes de carr\'es dans les corps de fonctions.
\newblock {\em C. R. Acad. Sci. Paris S\'er. I Math.}, 312(11):759--762, 1991.

\bibitem[CTOP02]{CTOP}
J.-L. Colliot-Th{\'e}l{\`e}ne, M.~Ojanguren, and R.~Parimala.
\newblock Quadratic forms over fraction fields of two-dimensional {H}enselian
  rings and {B}rauer groups of related schemes.
\newblock In {\em Algebra, arithmetic and geometry, {P}art {I}, {II} ({M}umbai,
  2000)}, volume~16 of {\em Tata Inst. Fund. Res. Stud. Math.}, pages 185--217.
  Tata Inst. Fund. Res., Bombay, 2002.

\bibitem[CTPS12]{CTPaSu}
Jean-Louis Colliot-Th{\'e}l{\`e}ne, Raman Parimala, and Venapally Suresh.
\newblock Patching and local-global principles for homogeneous spaces over
  function fields of p-adic curves.
\newblock {\em Comment. Math. Helv.}, 87(4):1011--1033, 2012.

\bibitem[CTPS16]{CTPS16TAMS}
J.-L. Colliot-Th{\'e}l{\`e}ne, R.~Parimala, and V.~Suresh.
\newblock Lois de r\'eciprocit\'e sup\'erieures et points rationnels.
\newblock {\em Trans. Amer. Math. Soc.}, 368(6):4219--4255, 2016.

\bibitem[Del77]{SGA4.5}
P.~Deligne.
\newblock {\em Cohomologie {\'e}tale}, volume \textbf{569} of {\em Lecture
  Notes in Math.}
\newblock Springer, 1977.

\bibitem[Gar01]{Gari01}
Ryan~Skip Garibaldi.
\newblock The {R}ost invariant has trivial kernel for quasi-split groups of low
  rank.
\newblock {\em Comment. Math. Helv.}, 76(4):684--711, 2001.

\bibitem[Gro67]{EGAivPIHES32}
A.~Grothendieck.
\newblock \'{E}l\'ements de g\'eom\'etrie alg\'ebrique. {IV}. \'{E}tude locale
  des sch\'emas et des morphismes de sch\'emas {IV}.
\newblock {\em Inst. Hautes \'Etudes Sci. Publ. Math.}, (32):361, 1967.

\bibitem[HHK09]{HHK}
David Harbater, Julia Hartmann, and Daniel Krashen.
\newblock Applications of patching to quadratic forms and central simple
  algebras.
\newblock {\em Invent. Math.}, 178(2):231--263, 2009.

\bibitem[HHK14]{HHK12}
David Harbater, Julia Hartmann, and Daniel Krashen.
\newblock Local-global principles for {G}alois cohomology.
\newblock {\em Comment. Math. Helv.}, 89(1):215--253, 2014.

\bibitem[HHK15]{HHK15AJM}
David Harbater, Julia Hartmann, and Daniel Krashen.
\newblock Local-global principles for torsors over arithmetic curves.
\newblock {\em Amer. J. Math.}, 137(6):1559--1612, 2015.

\bibitem[Hu12a]{HuThese}
Yong Hu.
\newblock {\em Approximation faible et principe local-global pour certaines
  vari\'et\'es rationnellement connexes}.
\newblock PhD thesis, Universit\'e Paris-Sud, Orsay, France, 2012.

\bibitem[Hu12b]{Hu10}
Yong Hu.
\newblock Local-global principle for quadratic forms over fraction fields of
  two-dimensional {H}enselian domains.
\newblock {\em Ann. Inst. Fourier (Grenoble)}, 62(6):2131--2143 (2013), 2012.

\bibitem[Hu13]{Hu11}
Yong Hu.
\newblock Division algebras and quadratic forms over fraction fields of
  two-dimensional henselian domains.
\newblock {\em Algebra Number Theory}, 7(8):1919--1952, 2013.

\bibitem[Hu14]{Hu12}
Yong Hu.
\newblock Hasse principle for simply connected groups over function fields of
  surfaces.
\newblock {\em J. Ramanujan Math. Soc.}, 29(2):155--199, 2014.

\bibitem[Hu15]{Hu13}
Yong Hu.
\newblock The {P}ythagoras number and the $u$-invariant of {L}aurent series
  fields in several variables.
\newblock {\em Journal of Algebra}, 426:243--258, 2015.

\bibitem[Jan16]{Jannsen16}
Uwe Jannsen.
\newblock Hasse principles for higher-dimensional fields.
\newblock {\em Ann. of Math. (2)}, 183(1):1--71, 2016.

\bibitem[Jaw01]{Ja}
Piotr Jaworski.
\newblock On the strong {H}asse principle for fields of quotients of power
  series rings in two variables.
\newblock {\em Math. Z.}, 236(3):531--566, 2001.

\bibitem[JR89]{JacobRost89}
Bill Jacob and Markus Rost.
\newblock Degree four cohomological invariants for quadratic forms.
\newblock {\em Invent. Math.}, 96(3):551--570, 1989.

\bibitem[Kat80]{KatoII80}
Kazuya Kato.
\newblock A generalization of local class field theory by using {$K$}-groups.
  {II}.
\newblock {\em J. Fac. Sci. Univ. Tokyo Sect. IA Math.}, 27(3):603--683, 1980.

\bibitem[Kat86]{Ka86}
Kazuya Kato.
\newblock A {H}asse principle for two-dimensional global fields.
\newblock {\em J. Reine Angew. Math.}, 366:142--183, 1986.
\newblock With an appendix by Jean-Louis Colliot-Th{\'e}l{\`e}ne.

\bibitem[KMRT98]{KMRT}
Max-Albert Knus, Alexander Merkurjev, Markus Rost, and Jean-Pierre Tignol.
\newblock {\em The book of involutions}, volume~44 of {\em American
  Mathematical Society Colloquium Publications}.
\newblock American Mathematical Society, Providence, RI, 1998.
\newblock With a preface in French by J. Tits.

\bibitem[Lam05]{Lam}
T.~Y. Lam.
\newblock {\em Introduction to quadratic forms over fields}, volume~67 of {\em
  Graduate Studies in Mathematics}.
\newblock American Mathematical Society, Providence, RI, 2005.

\bibitem[Lip75]{Lip75}
Joseph Lipman.
\newblock Introduction to resolution of singularities.
\newblock In {\em Algebraic geometry ({P}roc. {S}ympos. {P}ure {M}ath., {V}ol.
  29, {H}umboldt {S}tate {U}niv., {A}rcata, {C}alif., 1974)}, pages 187--230.
  Amer. Math. Soc., Providence, R.I., 1975.

\bibitem[Mil80]{Mil80}
James~S. Milne.
\newblock {\em \'{E}tale cohomology}, volume~33 of {\em Princeton Mathematical
  Series}.
\newblock Princeton University Press, Princeton, N.J., 1980.

\bibitem[MS90]{MS90}
A.~S. Merkurjev and A.~A. Suslin.
\newblock Norm residue homomorphism of degree three.
\newblock {\em Izv. Akad. Nauk SSSR Ser. Mat.}, 54(2):339--356, 1990.

\bibitem[OVV07]{OVV07}
D.~Orlov, A.~Vishik, and V.~Voevodsky.
\newblock An exact sequence for {$K\sp M\sb \ast/2$} with applications to
  quadratic forms.
\newblock {\em Ann. of Math. (2)}, 165(1):1--13, 2007.

\bibitem[Pan03]{Panin03}
I.~A. Panin.
\newblock The equicharacteristic case of the {G}ersten conjecture.
\newblock {\em Tr. Mat. Inst. Steklova}, 241(Teor. Chisel, Algebra i Algebr.
  Geom.):169--178, 2003.

\bibitem[Pfi95]{Pfister95}
Albrecht Pfister.
\newblock {\em Quadratic forms with applications to algebraic geometry and
  topology}, volume 217 of {\em London Mathematical Society Lecture Note
  Series}.
\newblock Cambridge University Press, Cambridge, 1995.

\bibitem[PR94]{PR94}
Vladimir Platonov and Andrei Rapinchuk.
\newblock {\em Algebraic groups and number theory}, volume 139 of {\em Pure and
  Applied Mathematics}.
\newblock Academic Press, Inc., Boston, MA, 1994.
\newblock Translated from the 1991 Russian original by Rachel Rowen.

\bibitem[Pre13]{Preeti13}
R.~Preeti.
\newblock Classification theorems for {H}ermitian forms, the {R}ost kernel and
  {H}asse principle over fields with {$cd\sb 2(k)\leq 3$}.
\newblock {\em J. Algebra}, 385:294--313, 2013.

\bibitem[Sai86]{Sai86}
Shuji Saito.
\newblock Arithmetic on two-dimensional local rings.
\newblock {\em Invent. Math.}, 85(2):379--414, 1986.

\bibitem[Sai87]{Sai87}
Shuji Saito.
\newblock Class field theory for two-dimensional local rings.
\newblock In {\em Galois representations and arithmetic algebraic geometry
  ({K}yoto, 1985/{T}okyo, 1986)}, volume~12 of {\em Adv. Stud. Pure Math.},
  pages 343--373. North-Holland, Amsterdam, 1987.

\bibitem[Sch85]{Schar}
Winfried Scharlau.
\newblock {\em Quadratic and {H}ermitian forms}, volume 270 of {\em Grundlehren
  der Mathematischen Wissenschaften [Fundamental Principles of Mathematical
  Sciences]}.
\newblock Springer-Verlag, Berlin, 1985.

\bibitem[Sch01]{Sch01}
Claus Scheiderer.
\newblock On sums of squares in local rings.
\newblock {\em J. Reine Angew. Math.}, 540:205--227, 2001.

\bibitem[Ser95]{SerreCohGalSemBour95}
Jean-Pierre Serre.
\newblock Cohomologie galoisienne: progr\`es et probl\`emes.
\newblock {\em Ast\'erisque}, (227):Exp.\ No.\ 783, 4, 229--257, 1995.
\newblock S{\'e}minaire Bourbaki, Vol. 1993/94.

\bibitem[Sha66]{Sha66}
I.~R. Shafarevich.
\newblock {\em Lectures on minimal models and birational transformations of two
  dimensional schemes}.
\newblock Notes by C. P. Ramanujam. Tata Institute of Fundamental Research
  Lectures on Mathematics and Physics, No. 37. Tata Institute of Fundamental
  Research, Bombay, 1966.

\bibitem[Sus85]{Suslin85}
A.~A. Suslin.
\newblock Algebraic {$K$}-theory and the norm-residue homomorphism.
\newblock {\em J. Soviet Math.}, \textbf{30}:2556--2611, 1985.

\bibitem[Voe11]{Voe11BKconj}
Vladimir Voevodsky.
\newblock On motivic cohomology with {$\bold Z/l$}-coefficients.
\newblock {\em Ann. of Math. (2)}, 174(1):401--438, 2011.

\end{thebibliography}

\

Author information:

\

Yong HU

\

Universit\'e de Caen, Campus 2

Laboratoire de Math\'ematiques Nicolas Oresme

14032, Caen Cedex

France

Email: yong.hu@unicaen.fr

\end{document}